\setlist{labelindent=1pt,itemsep=0.1cm}
\setlist[itemize]{leftmargin=0.7cm}
\setlist[enumerate]{itemindent=0em,leftmargin=0.7cm}
\begin{document}
\title*{Fixed point results for Pre\v{s}i\'c type contractive mappings in $b$-metric spaces}
\titlerunning{Fixed point results for Pre\v{s}i\'c type contractive mappings in $b$-metric spaces} %for an abbreviated version of your contribution title if the original one is too long
\author{Talat Nazir \and Sergei Silvestrov}
\authorrunning{T. Nazir, S. Silvestrov} %for an abbreviated version of your contribution title if the original one is too long

\institute{Talat Nazir,
\at
Department of Mathematical Sciences, University of South Africa, Florida 0003, South Africa.\\ \email{talatn@unisa.ac.za}
\and
Sergei Silvestrov
\at Division of Mathematics and Physics, School of Education, Culture and Communication, M{\"a}lardalen University, Box 883, 72123 V{\"a}ster{\aa}s, Sweden. \\ \email{sergei.silvestrov@mdu.se}
\and }
%
% Use the package "url.sty" to avoid
% problems with special characters
% used in your e-mail or web address
%

%\index[aut]{Nazir, Talat}
%\index[aut]{Silvestrov, Sergei}

\maketitle
\label{chap:NazirSilvestrov:FPRPMbMS}

\abstract*{The fixed point results for generalized Pre\v{s}i\'c type mappings in the setup of $b$-metric spaces are obtained. The stability of fixed point set of Pre\v{s}i\'c type mappings is also established. Several examples are also presented to illustrate the validity of the main results.
\keywords{Pre\v{s}i\'c type mapping, fixed point, generalized contraction, $b$-metric space}\\
{\bf MSC 2020 Classification:} 47H10, 54C60, 54H25}

\abstract{The fixed point results for generalized Pre\v{s}i\'c type mappings in the setup of $b$-metric spaces are obtained. The stability of fixed point set of Pre\v{s}i\'c type mappings is also established. Several examples are also presented to illustrate the validity of the main results.
\keywords{Pre\v{s}i\'c type mapping, fixed point, generalized contraction, $b$-metric space}\\
{\bf MSC 2020 Classification:} 47H10, 54C60, 54H25}

\section{Introduction to Pre\v{s}i\'c type Mappings and Fixed Points}
\label{secNaSe:IntroductionPresic}
The study of fixed point of maps with certain type of
contractive restrictions is a powerful approach towards solving variety of scientific problems in various areas of mathematics as well as important methodology for computational algorithms in the natural sciences and engineering subjects.

Banach contraction principle \cite{Banach} is a simple powerful
result with a wide range of applications, including iterative methods for solving linear, nonlinear, differential, integral, and difference equations.
Banach \cite{Banach} initiated the study of fixed point theory for contraction mappings. There are several generalizations and extensions of the Banach contraction principle in the existing literature. Let us firstly recall the Banach contraction principle.
Henceforth, $\mathbb{R}$, $\mathbb{R}_{\geq 0}$, $\mathbb{R}_{>0}$, $\mathbb{Z}_{\geq0}$ and $\mathbb{Z}_{> 0}$ denote the sets of real numbers, non-negative real numbers, positive real numbers, non-negative integers and positive integers, respectively.
\begin{theorem}[Banach contraction principle, \cite{Banach}]
	\label{NaSeTheorem1.1.1.} If $(X,d)$ is a complete
	metric space and mapping $f:X\rightarrow X$ satisfies
	\begin{equation*}
		d(fx,fy)\leq \eta \,d(x,y),\  \text{for all }x,y\in X,
	\end{equation*}
	where $0\leq \eta <1$, then there exists a unique
$u\in X$ that satisfies $u=fu$. Moreover, for any $x_{0}\in X$, the iterative
	sequence $x_{n+1}=f\left( x_{n}\right) $ converges to $u$.\end{theorem}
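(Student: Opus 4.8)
The plan is to run the classical Picard iteration argument. First I would fix an arbitrary $x_{0}\in X$ and consider the sequence defined by $x_{n+1}=f(x_{n})$ for $n\geq 0$. Applying the contractive estimate repeatedly gives
\[
d(x_{n+1},x_{n})=d(fx_{n},fx_{n-1})\leq \eta\, d(x_{n},x_{n-1})\leq \cdots \leq \eta^{n}\, d(x_{1},x_{0}),
\]
so the consecutive distances decay geometrically since $0\leq\eta<1$.

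Next I would show $\{x_{n}\}$ is Cauchy. For $m>n$, the triangle inequality together with the previous bound yields
\[
d(x_{m},x_{n})\leq \sum_{k=n}^{m-1} d(x_{k+1},x_{k})\leq \Bigl(\sum_{k=n}^{m-1}\eta^{k}\Bigr) d(x_{1},x_{0})\leq \frac{\eta^{n}}{1-\eta}\, d(x_{1},x_{0}),
\]
and the right-hand side tends to $0$ as $n\to\infty$. By completeness of $(X,d)$ the sequence converges to some $u\in X$, which also settles the asserted convergence of the iterates.

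Then I would check that $u$ is a fixed point. The contraction condition makes $f$ (Lipschitz) continuous, so letting $n\to\infty$ in $x_{n+1}=f(x_{n})$ gives $u=f(u)$; equivalently one estimates $d(u,fu)\leq d(u,x_{n+1})+d(fx_{n},fu)\leq d(u,x_{n+1})+\eta\, d(x_{n},u)\to 0$. Finally, for uniqueness, if $u=fu$ and $v=fv$ then $d(u,v)=d(fu,fv)\leq \eta\, d(u,v)$, and since $\eta<1$ this forces $d(u,v)=0$, hence $u=v$.

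There is no substantive obstacle in this proof; the only step demanding a little care is the geometric-series estimate establishing the Cauchy property, where the strict inequality $\eta<1$ is used in an essential way (both to sum the series and to make the tail $\eta^{n}/(1-\eta)$ vanish).
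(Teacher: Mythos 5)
Your proof is correct and complete: it is the standard Picard iteration argument (geometric decay of consecutive distances, the telescoping/geometric-series bound giving the Cauchy property, continuity of $f$ to pass to the limit, and the contraction inequality for uniqueness). The paper states this theorem only as a cited classical result from Banach and gives no proof of its own, so there is nothing to contrast with; your argument is exactly the expected one.
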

  Let $f:X^{k}\rightarrow X$, where $k\in \mathbb{Z}_{> 0}$.
A point $x^{\ast }\in X$ is called a fixed point of $f$ if $f(x^{\ast},\ldots ,x^{\ast })=x^{\ast }$. Consider the possibly non-linear $k$-th order
difference equation
\begin{equation}
	x_{n+k}=f(x_{n},\ldots ,x_{n+k-1}),\, \,n=1,2,\ldots  \label{t.1.1}
\end{equation}
with the initial values $x_{1},\ldots ,x_{k}\in X$.
The difference equation \eqref{t.1.1} can be studied by means of fixed point theory in
view of the fact that $x^{\ast }$ in $X$ is a solution of \eqref{t.1.1} if and only
if $x^{\ast }$ is a fixed point of the self-mapping $F:X\rightarrow X$ given
by
\begin{equation*}
	F(x)=f(x,\ldots ,x),\ \text{for all}\ x\in X.
\end{equation*}

One of the most important results in this direction is obtained by
Pre\v{s}i\'c \cite{Presic}.
\begin{theorem}
	\label{NaSeTheorem1.1.2.} Let $(X,d)$ be a complete metric space,
	$k\in \mathbb{Z}_{> 0}$, and let $f:X^{k}\rightarrow X$ be a mapping satisfying the
	following contractive type condition
	\begin{equation*}
		d(f(x_{1},\ldots ,x_{k}),f(x_{2},\ldots ,x_{k+1}))\leq \sum\limits_{j=1}^{k} r_{j}d(x_{j},x_{j+1}),
	\end{equation*}
	for every $x_{1},\ldots ,x_{k+1}\in X$, where $r_{1},\ldots ,r_{k}\in \mathbb{R}_{\geq 0}$
	are non-negative constants such that $r_{1}+\ldots +r_{k}<1$. Then
	there exists a unique point $u\in X$ such that $f(u,\ldots ,u)=u$. Moreover,
	for any arbitrary points $x_{1},\ldots ,x_{k}\in X$, the sequence \eqref{t.1.1}
	converges to $u$.\end{theorem}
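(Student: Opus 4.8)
The plan is the classical Presi\'c argument: produce the natural iterative sequence, show that its consecutive-difference sequence decays geometrically, conclude it is Cauchy and hence convergent by completeness, and identify the limit as the unique fixed point.

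Put $\lambda = r_1 + \cdots + r_k < 1$ and, for the sequence $(x_n)$ defined by \eqref{t.1.1} with the prescribed initial data $x_1,\ldots,x_k$, set $\alpha_n = d(x_n, x_{n+1})$. Applying the contractive hypothesis to the tuples $(x_n,\ldots,x_{n+k-1})$ and $(x_{n+1},\ldots,x_{n+k})$ gives the recurrence inequality $\alpha_{n+k} \le r_1\alpha_n + r_2\alpha_{n+1} + \cdots + r_k\alpha_{n+k-1}$. First I would fix an exponent $\theta \in (0,1)$ with $r_1 + r_2\theta + \cdots + r_k\theta^{k-1} \le \theta^{k}$; such a $\theta$ exists because the continuous function $\theta \mapsto \theta^{k} - (r_1 + r_2\theta + \cdots + r_k\theta^{k-1})$ equals $1-\lambda > 0$ at $\theta = 1$ and therefore stays positive throughout some interval $(\theta_0,1)\subseteq(0,1)$. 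With $K = \max\{\alpha_1/\theta,\ldots,\alpha_k/\theta^{k}\}$ (the case $\alpha_1 = \cdots = \alpha_k = 0$ being trivial, since the recurrence then forces all $\alpha_n = 0$), an induction on $n$ using the recurrence inequality gives $\alpha_n \le K\theta^{n}$ for every $n$.

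Next, for $m > n$ the triangle inequality together with the geometric bound yields $d(x_n,x_m) \le \sum_{i=n}^{m-1}\alpha_i \le K\theta^{n}/(1-\theta)$, so $(x_n)$ is Cauchy and, by completeness of $(X,d)$, converges to some $u \in X$. To verify $u = f(u,\ldots,u)$ I would write $d(u, f(u,\ldots,u)) \le d(u, x_{n+k}) + d(x_{n+k}, f(u,\ldots,u))$ with $x_{n+k} = f(x_n,\ldots,x_{n+k-1})$, and bound the second term by a telescoping chain $T_0, T_1, \ldots, T_k$ of $k$-tuples, where $T_0 = (x_n,\ldots,x_{n+k-1})$, $T_k = (u,\ldots,u)$, and each $T_{i+1}$ is obtained from $T_i$ by deleting its first entry and appending $u$. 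Consecutive tuples in this chain differ by exactly one unit shift of a $(k+1)$-tuple, so the contractive hypothesis applies to each pair $f(T_i), f(T_{i+1})$ and produces a finite sum of terms of the form $r_j d(x_\ell, x_{\ell+1})$ (with $\ell \ge n$) and $r_j d(x_{n+k-1}, u)$, all of which tend to $0$ as $n \to \infty$; hence $d(u, f(u,\ldots,u)) = 0$.

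For uniqueness, if $u$ and $v$ are both fixed points, the analogous telescoping applied to $d\bigl(f(u,\ldots,u), f(v,\ldots,v)\bigr)$ — now successively deleting the first entry and appending $v$ — collapses all but the ``boundary'' terms and gives $d(u,v) \le (r_1 + \cdots + r_k)\,d(u,v) = \lambda\,d(u,v)$, so $\lambda < 1$ forces $u = v$. Finally, since \eqref{t.1.1} was shown to converge to a fixed point for every choice of initial points and the fixed point is unique, it always converges to $u$. I expect the two genuinely delicate points to be the existence of the exponent $\theta$, where the hypothesis $\lambda < 1$ enters essentially, and the bookkeeping in the telescoping chains, where one must keep track of the fact that the contractive inequality only compares $f$ on one-step shifts.
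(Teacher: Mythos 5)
Your argument is correct and is essentially the same as the classical Pre\v{s}i\'c proof, which the paper does not reproduce for Theorem \ref{NaSeTheorem1.1.2.} (it is cited from \cite{Presic}) but follows as a template in its proof of Theorem \ref{NaSeTheorem2.1.5.}: the geometric bound $\alpha_n\leq K\theta^{n}$ with $K=\max\{\alpha_1/\theta,\dots,\alpha_k/\theta^{k}\}$, the resulting Cauchy estimate, and the telescoping chain of one-step shifts to identify the limit and prove uniqueness. All steps check out, including the existence of $\theta$ (one can even take $\theta=\lambda^{1/k}$ explicitly) and the bookkeeping in the two telescoping chains.
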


  It is easy to show that for $k=1$, Theorem \ref{NaSeTheorem1.1.2.} reduces to the
Banach contraction principle.

  \'{C}iri\'{c} and Pre\v{s}i\'c \cite{CiricPresic07} generalized the
above theorem as follows.

\begin{theorem}
	\label{NaSeTheorem1.1.3.} Let $(X,d)$ be a complete metric space, $k\in \mathbb{Z}_{> 0}$, and let $f:X^{k}\rightarrow X$ be a mapping satisfying the
	following contractive type condition
	\begin{equation*}
		d(f(x_{1},\ldots ,x_{k}),f(x_{2},\ldots ,x_{k+1}))
        \leq \kappa \max\{d(x_{1},x_{2}),\ldots, d(x_{j},x_{j+1}),\ldots ,d(x_{k},x_{k+1})\},
	\end{equation*}
	for every $x_{1},\ldots ,x_{k+1}\in X$, where $0<\kappa<1$ is a constant. Then
	there exists $u\in X$ such that $f(u,\ldots ,u)=u$. Moreover, for any
	arbitrary points $x_{1},\ldots ,x_{k}\in X$, the sequence \eqref{t.1.1} is
	convergent and
	\begin{equation*}
		\lim \limits_{n\rightarrow \infty }x_{n}=f(\lim \limits_{n\rightarrow \infty
		}x_{n},\ldots ,\lim \limits_{n\rightarrow \infty }x_{n}).
	\end{equation*}
	If in addition we suppose that
	\begin{equation*}
		d(f(x,\ldots ,x),f(y,\ldots ,y))<d(x,y)
	\end{equation*}
	for all $x,y\in X$ with $x\neq y$, then $u$ is the unique point in $X$
	with $u=f(u,u,\dots,u)$.
\end{theorem}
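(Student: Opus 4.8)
The plan is to follow the classical Ćirić–Presić scheme: first show the orbit decays geometrically, then that it is Cauchy, then identify its limit as a fixed point, and finally settle uniqueness. Fix arbitrary $x_1,\ldots,x_k\in X$, let $\{x_n\}$ be generated by \eqref{t.1.1}, set $\theta=\kappa^{1/k}\in(0,1)$ and $\alpha_n=d(x_n,x_{n+1})$. Applying the contractive hypothesis to the tuples $(x_n,\ldots,x_{n+k-1})$ and $(x_{n+1},\ldots,x_{n+k})$ yields
\[
\alpha_{n+k}\le\kappa\max\{\alpha_n,\alpha_{n+1},\ldots,\alpha_{n+k-1}\}.
\]
Choosing $K=\max\{\alpha_1\theta^{-1},\ldots,\alpha_k\theta^{-k}\}$, so that $\alpha_i\le K\theta^i$ for $1\le i\le k$, an induction on $n$ (using $\theta<1$, whence $\max\{\theta^n,\ldots,\theta^{n+k-1}\}=\theta^n$) gives $\alpha_{n+k}\le\kappa K\theta^{n}=K\theta^{n+k}$, since $\theta^k=\kappa$; thus $d(x_n,x_{n+1})\le K\theta^n$ for all $n$. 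Then for $m>n$ one has $d(x_n,x_m)\le\sum_{i=n}^{m-1}K\theta^i\le K\theta^n/(1-\theta)\to0$, so $\{x_n\}$ is Cauchy and, by completeness of $(X,d)$, converges to some $u\in X$.

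The crucial step is to prove $u=f(u,\ldots,u)$, and the obstacle is that the contractive condition compares $f$ only on tuples differing by a single shift, not on $(x_n,\ldots,x_{n+k-1})$ versus $(u,\ldots,u)$ directly. I would bridge this by interpolation: for $0\le j\le k$ put
\[
y^{(j)}=\bigl(x_{n+j},x_{n+j+1},\ldots,x_{n+k-1},\underbrace{u,\ldots,u}_{j}\bigr),
\]
so that $y^{(0)}=(x_n,\ldots,x_{n+k-1})$, $y^{(k)}=(u,\ldots,u)$, and each $y^{(j+1)}$ is precisely the one-step shift of $y^{(j)}$ with new last coordinate $u$. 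Hence the hypothesis bounds $d(f(y^{(j)}),f(y^{(j+1)}))$ by $\kappa$ times a maximum of finitely many terms among $\alpha_{n+j},\ldots,\alpha_{n+k-2}$ and $d(x_{n+k-1},u)$ (the remaining consecutive distances equalling $d(u,u)=0$), each of which tends to $0$ as $n\to\infty$. Summing the triangle inequality over $j=0,\ldots,k-1$ gives $d(x_{n+k},f(u,\ldots,u))=d(f(y^{(0)}),f(y^{(k)}))\to0$; since also $x_{n+k}\to u$, uniqueness of limits forces $u=f(u,\ldots,u)$. As $\lim_n x_n=u$, this is exactly the claimed identity $\lim_n x_n=f(\lim_n x_n,\ldots,\lim_n x_n)$.

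For the last assertion, assume in addition that $d(f(x,\ldots,x),f(y,\ldots,y))<d(x,y)$ whenever $x\ne y$. If $u$ and $v$ were distinct points with $u=f(u,\ldots,u)$ and $v=f(v,\ldots,v)$, then $d(u,v)=d(f(u,\ldots,u),f(v,\ldots,v))<d(u,v)$, a contradiction; hence the fixed point is unique. The only genuinely non-routine ingredient is the interpolation argument of the second paragraph; everything else is the standard geometric-decay-plus-completeness machinery already underlying the proofs of Theorems \ref{NaSeTheorem1.1.1.} and \ref{NaSeTheorem1.1.2.}.
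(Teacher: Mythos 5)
Your proof is correct and follows essentially the same route as the paper's proof of the $b$-metric generalization of this result (Theorem \ref{NaSeTheorem2.1.5.}, of which this statement is the case $b=1$): the geometric-decay estimate $d(x_n,x_{n+1})\leq K\theta^{n}$ with $\theta=\kappa^{1/k}$, the Cauchy/completeness step, the telescoping interpolation between $(x_{n},\ldots,x_{n+k-1})$ and $(u,\ldots,u)$ through one-step shifts, and the strict-inequality uniqueness argument. The paper itself states Theorem \ref{NaSeTheorem1.1.3.} without proof, citing \cite{CiricPresic07}, and your argument matches the standard one; the only cosmetic difference from the in-paper analogue is that you append copies of $u$ at the end of the interpolating tuples rather than prepending them at the front.
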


  Abbas at al. \cite{AIN15} extended Theorem \ref{NaSeTheorem1.1.2.} and Theorem \ref{NaSeTheorem1.1.3.}, and
proved the following results for Pre\v{s}i\'c type weakly contractive mappings.

\begin{theorem}
	\label{NaSeTheorem1.1.4.} Let $(X,d)$ be a complete metric
	space, $k\in\mathbb{Z}_{>0}$ and $f:X^{k}\rightarrow X$ be a mapping. If there
	exists a lower semi-continuous function $\phi :\mathbb{R}_{\geq 0}\rightarrow \mathbb{R}_{\geq 0}$ with $\phi (t)=0$ only for $t=0$, such that
	\begin{multline*}
			d(f(x_{1},x_{2},\ldots ,x_{k}),f(x_{2},x_{3},\ldots ,x_{k+1}))\\
            \leq  \max\{d(x_{i},x_{i+1}):1\leq i\leq k\} -\phi (\max \{d(x_{i},x_{i+1}):1\leq i\leq k\}),
	\end{multline*}
	for all $(x_{1},\ldots ,x_{k+1})\in X^{k+1}$, then, for any points
	$x_{1},\ldots ,x_{k}\in X$, the sequence $\{x_{n}\}_{n\geq 1}$ defined by \eqref{t.1.1}
	converges to $u\in X$ when $n\to \infty$, and $u$ is a fixed point of $f$, that is, $u=f(u,\dots,u)$. If, moreover,
	\begin{equation*}
		d(f(x,\dots,x),f(y,\dots,y))=d(x,y)-\phi (d(x,y)),
	\end{equation*}
for all $x,y\in X$ with $x\neq y$, then $u$ is the unique fixed point
	of $f$.
\end{theorem}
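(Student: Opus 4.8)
The plan is to follow the classical three-movement argument for Pre\v{s}i\'c-type theorems — an estimate on the consecutive displacements, a Cauchy argument, and the identification of the limit as a fixed point — with the lower semicontinuity of $\phi$ playing the role that a strict contraction factor plays in the linear case. Write $d_n=d(x_n,x_{n+1})$ and, for $n\geq1$, set $\Delta_n=\max\{d_{n+i}:0\leq i\leq k-1\}$. Applying the contractive hypothesis to the two consecutive length-$k$ blocks $(x_n,\dots,x_{n+k-1})$ and $(x_{n+1},\dots,x_{n+k})$, which differ by a single shift, gives $d_{n+k}\leq\Delta_n-\phi(\Delta_n)\leq\Delta_n$; since the remaining entries defining $\Delta_{n+1}$ already occur among those defining $\Delta_n$, it follows that $\Delta_{n+1}\leq\Delta_n$, so $\Delta_n\downarrow\Delta$ for some $\Delta\geq0$. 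If $\Delta>0$, then choosing for each $n$ an index $m(n)\in\{n,\dots,n+k-1\}$ with $d_{m(n)}=\Delta_n$ and passing to a subsequence yields indices $m\to\infty$ along which $d_m\to\Delta$; using $d_m\leq\Delta_{m-k}-\phi(\Delta_{m-k})$, the convergence $\Delta_{m-k}\to\Delta$ and lower semicontinuity $\liminf\phi(\Delta_{m-k})\geq\phi(\Delta)>0$, one gets $\Delta\leq\Delta-\phi(\Delta)$, a contradiction. Hence $d_n\to0$, and consequently $d(x_n,x_{n+p})\to0$ for every fixed $p\geq1$.

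I expect the Cauchy property to be the crux. The natural approach is the standard contradiction: if $\{x_n\}$ were not Cauchy there would be $\varepsilon>0$ and, for each $j$, indices $n_j>m_j\geq j$ with $d(x_{m_j},x_{n_j})\geq\varepsilon$, where $n_j$ is chosen minimal; then $d(x_{m_j},x_{n_j-1})<\varepsilon$, a bounded index gap $n_j-m_j$ is ruled out because it would force $d(x_{m_j},x_{n_j})\to0$ against $d_n\to0$, and one deduces $d(x_{m_j},x_{n_j})\to\varepsilon$. One then wants to transport this comparison one time unit forward by writing $x_{m_j+k}$ and $x_{n_j+k}$ as values of $f$ and connecting their argument blocks through a short staircase $S_0,\dots,S_k$ of $k$-tuples in which consecutive $S_i$ differ by one shift, every entry of every $S_i$ lies within an $o(1)$-error (coming from $d_n\to0$) of either $x_{m_j}$ or $x_{n_j}$, and the Pre\v{s}i\'c inequality may be applied rung by rung; combining the resulting estimates with triangle-inequality bookkeeping and passing to the limit should, via lower semicontinuity of $\phi$ (so that $\liminf_j\phi(d(x_{m_j},x_{n_j}))\geq\phi(\varepsilon)>0$), contradict $d(x_{m_j},x_{n_j})\to\varepsilon$. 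The delicate point — and the main obstacle — is that, because the Pre\v{s}i\'c condition only compares $f$ on blocks differing by a single shift, a crude staircase of length $k$ accumulates the distance $\varepsilon$ on every rung and yields only $\varepsilon\leq k(\varepsilon-\phi(\varepsilon))$, which is not a contradiction for $k\geq2$; making the bookkeeping close (e.g.\ by exploiting the minimality of $n_j$ to keep the ``seam'' distances strictly below $\varepsilon$ on all but boundedly many rungs) is where the real work lies. Once Cauchyness is established, completeness of $(X,d)$ provides $u\in X$ with $x_n\to u$.

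Finally I would identify $u$ as a fixed point and, under the extra hypothesis, as the unique one. For the first, fix $n$ and run the staircase of $k$-tuples $T_i=(x_{n+i},\dots,x_{n+k-1},u,\dots,u)$ with $i$ trailing copies of $u$, $i=0,\dots,k$: here $T_0=(x_n,\dots,x_{n+k-1})$, $T_k=(u,\dots,u)$, $f(T_0)=x_{n+k}$, $f(T_k)=f(u,\dots,u)$, and $T_{i+1}$ is the single shift of $T_i$ obtained by deleting the first entry and appending $u$, so that applying the contractive inequality on each of the $k$ rungs and dropping the $-\phi(\cdot)$ terms gives $d\big(x_{n+k},f(u,\dots,u)\big)\leq\sum_{i=0}^{k-1}m_{n,i}$, where $m_{n,i}=\max\big(\{d_{n+\ell}:i\leq\ell\leq k-2\}\cup\{d(x_{n+k-1},u)\}\big)$. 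Since $d_{n+\ell}\to0$ and $d(x_{n+k-1},u)\to0$, the right-hand side tends to $0$; as $x_{n+k}\to u$ as well, we conclude $u=f(u,\dots,u)$. For uniqueness, if $u\neq v$ were both fixed points, the additional identity gives $d(u,v)=d(f(u,\dots,u),f(v,\dots,v))=d(u,v)-\phi(d(u,v))$, hence $\phi(d(u,v))=0$ and $d(u,v)=0$, a contradiction; so $u$ is the unique fixed point of $f$.
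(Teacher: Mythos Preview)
Your monotonicity argument for $\Delta_n$, the lower-semicontinuity step forcing $\Delta_n\to 0$, the $k$-rung staircase $T_0,\dots,T_k$ identifying $u$ as a fixed point, and the uniqueness argument all track the paper's proof (given for the $b$-metric generalisation, Theorem~\ref{NaSeTheorem2.2.1}, which reduces to the present statement when $b=1$) essentially line for line.

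The divergence --- and the gap you yourself flag --- is at the Cauchy step. The paper does not attempt your contradiction argument with extremal indices $m_j<n_j$; it proceeds directly, iterating the triangle inequality along the $p$ intermediate points and applying the contractive hypothesis once on each pair of one-shifted blocks, obtaining (for $b=1$)
\[
d(x_{n+k},x_{n+k+p})\;\leq\;\sum_{j=1}^{p}\bigl(\Delta_{n+j-1}-\phi(\Delta_{n+j-1})\bigr).
\]
This route completely avoids the obstacle you worry about: no factor $k$ appears, because consecutive iterates $x_{n+k+j-1},x_{n+k+j}$ are \emph{already} images of blocks differing by a single shift, so no auxiliary staircase through mixed tuples is needed at this stage. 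That said, the paper then simply asserts Cauchyness from the displayed inequality; dropping the $\phi$-terms leaves only $\sum_{j=1}^{p}\Delta_{n+j-1}$, and from $\Delta_n\downarrow 0$ alone this is not uniformly small in $p$, so the paper's argument is also terse at exactly the point you identify as delicate. In short: your first and third movements match the paper, your Cauchy strategy is genuinely different from the paper's, and your diagnosis that the Cauchy step is where the real work lies is correct --- neither your contradiction sketch nor the paper's direct estimate, as written, closes that step without some further quantitative input on the decay of $\Delta_n$.
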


\section{$b$-Metric Spaces}
Czerwik introduced $b$-metric spaces as generalization of metric spaces,
first for special case $b=2$ in \cite{C93} in 1993, and then for arbitrary real $b\geq1$ in \cite{C98} in 1998.
In $b$-metric space, the triangular inequality involves a constant
$b\geq 1$ called $b$-metric constant.
\begin{definition}
	\label{NaSeDefbmetricspace} Let $X\neq \emptyset $ be a set, and $b\in \mathbb{R}$, $b\geq 1$. A mapping $d:X\times X\rightarrow
	\mathbb{R}_{\geq 0}$ is called a {\it $b$-metric} on $X$ if for any $x,y,z\in X,$ the following
	conditions hold:
	\begin{enumerate}
		\item[(b$_{1}$)] \label{b1bmetrspdef} \quad $d(x,y)=0$ if and only if $x=y,$
		\item[(b$_{2}$)] \label{b2bmetrspdef} \quad $d(x,y)=d(y,x),$
		\item[(b$_{3}$)] \label{b3bmetrspdef} \quad $d(x,y)\leq b\left( d(x,z)+d(z,y)\right).$
	\end{enumerate}
	  The pair $(X,d)$ is called a {\it $b$-metric space} with parameter $b\in \mathbb{R}_{\geq 1}$.
\end{definition}

Note that, if $b=1,$ the Definition \ref{NaSeDefbmetricspace} coincides with that of a metric space.

\begin{example} \textrm{(\cite{RPA14})}
	\label{NaSeExample1.3.2.}  Let $(X,d)$ be a metric
	space, and $\rho (x,y)=(d(x,y))^{p},$ where $p\in \mathbb{R}$, $p>1$. Then $\rho $ is a $b$-metric with $b=2^{p-1}.$
	Obviously, the conditions \ref{b1bmetrspdef} and \ref{b2bmetrspdef} in the Definition \ref{NaSeDefbmetricspace}
	are satisfied. If $p \in \mathbb{R}, p>1$ then the convexity of the function $f(x)=x^{p}$ on $\mathbb{R}_{>0}$ implies
	\begin{equation*}
		\left( \frac{a+b}{2}\right) ^{p}\leq \frac{1}{2}\left( a^{p}+b^{p}\right),
	\end{equation*}
	and hence, $\left( a+b\right) ^{p}\leq 2^{p-1}(a^{p}+b^{p})$. This inequality and $f(x)=x^{p}$ being monotonically increasing on $\mathbb{R}_{>0}$
yield, for all $x,y,z\in X$,
\begin{eqnarray*}
		\rho (x,y) &=&(d(x,y))^{p}\leq (d(x,z)+d(z,y))^{p} \\
		&\leq &2^{p-1}(d(x,z)^{p}+d(z,y)^{p}) \\
		&=&2^{p-1}(\rho (x,z)+\rho (z,y)).
	\end{eqnarray*}
	So, the condition \ref{b3bmetrspdef} also holds, and hence $\rho $
	is a $b$-metric on $X$ with $b=2^{p-1}$.\end{example}

\begin{example}  If $X=\mathbb{R}$ and $d(x,y)=\vert x-y\vert $ is the
usual Euclidean metric, then $\rho (x,y)=(x-y)^{2}$ is a $b$-metric on $
\mathbb{R}
$ with $b=2,$ but is not a metric on $\mathbb{R}$.
\end{example}

\begin{example} \textbf{(\cite{Bakhtin09})} The space
\[
l_{p}=\{ \{x_{n}\}_{n\geq 1} \subset \mathbb{R} \mid \sum_{n=1}^{\infty }\left \vert x_{n}\right \vert ^{p}<\infty \},\ \text{for } 0<p<1
\]
is a $b$-metric space with $b=2^{\frac{1}{p}}$ and the $b$-metric $d:l_{p}\times l_{p}\rightarrow \mathbb{R}_{\geq 0}$ defined by
\[
d(x,y)=\left[ \sum_{n=1}^{\infty }\left \vert x_{n}-y_{n}\right \vert ^{p}\right] ^{\frac{1}{p}}.
\]
For $p\geq 1$, it is a metric space, and hence, it is a $b$-metric space for all $p > 0$.
\end{example}

\begin{example} \textbf{(\cite[\S 15.9, pages 157-158]{Koethe69TopvectspI}, \cite[Lemma 3.10.3, pages 61-62]{BerghLofstrbook76InterpolatSps})}
For a measure space $(\Omega,\mu)$ with finite measure $\mu$, the $L_p(\mu)$-space of the equivalence classes of measurable functions $x:\Omega\rightarrow \mathbb{R}$, satisfying $\int\limits_{\Omega} \vert x(t)\vert^{p}dt<\infty$ (with the equivalence relation given by equality of functions up to $\mu$-measure zero), is a $b$-metric space with $b=2^{\frac{1}{p}-1}$ \text{ for } $0<p<1$ with the $b$-metric
$$d(x,y)=\big(\int\limits_{\Omega} \vert x(t)-y(t)\vert^{p}dt\big)^{\frac{1}{p}}.
$$
The condition \ref{b3bmetrspdef} follows by replacement of $x$ by $x-z$ and $y$ by $z-y$ for $x,y,z\in L_p(\mu)$ in the following inequality:
$$ \big (\int\limits_{\Omega} \vert x \vert^p d\mu + \int\limits_{\Omega} \vert y \vert^p d\mu\big)^{\frac{1}{p}}
\leq 2^{\frac{1-p}{p}}\big(\big(\int\limits_{\Omega} \vert x \vert^p d\mu\big)^{\frac{1}{p}} + \big(\int\limits_{\Omega} \vert y \vert^p d\mu\big)^{\frac{1}{p}}\big).
$$
\end{example}

\begin{definition}
	\label{NaSeDefinition1.3.3.} % \cite{Bakhtin09}
Let $(X,d)$ be a $b$-metric space. Then a subset $C\subset X$ is called:
	\begin{enumerate}[label=\textup{(\roman*)}, ref=(\roman*)]
		\item closed if and only if for each sequence $\{x_{n}\}$ in $C$ which
		converges to an element $x$, we have $x$ $\in C$ (that is, $C=\overline{C}$).
		
		\item compact if and only if for every sequence of elements of $C$
		there exists a subsequence that converges to an element of $C$.
		
		\item bounded if and only if $\delta (C):=\sup \{d(x,y):x,y\in
		C\}<\infty $.
	\end{enumerate}
\end{definition}

\begin{definition}
	\label{NaSeDefinition1.3.4.} % \cite{Bakhtin09}
Let $(X,d)$ be a $b$-metric space.
	A sequence $\{x_{n}\}$ in $X$ is called:
\begin{enumerate}[label=\textup{(\Roman*)}, ref=(\Roman*)]
	\item Cauchy if and only if for $\varepsilon >0,$ there exists $%
		n(\varepsilon )\in
		\mathbb{Z}_{> 0}$ such that for each $n,m\geq n(\varepsilon )$, we have $d(x_{n},x_{m})<%
		\varepsilon .$		
	\item Convergent if and only if there exists $x\in X$ such that for all $\varepsilon >0$ there exists $n(\varepsilon )\in\mathbb{Z}_{> 0}$ such that for all $n\geq n(\varepsilon )$, we have $d(x_{n},x)<\varepsilon $. In this case, we write $\lim \limits_{n\rightarrow \infty }x_{n}=x.$
	\end{enumerate}
\end{definition}

It is known that a sequence $\{x_{n}\}$ in $b$-metric space $X$ is
Cauchy if and only if $\lim \limits_{n\rightarrow \infty }d(x_{n},x_{n+p})=0$
for all $p\in\mathbb{Z}_{> 0}$. A sequence $\{x_{n}\}$ is convergent to $x\in X$ if and only if $\lim
\limits_{n\rightarrow \infty }d(x_{n},x)=0.$ A $b$-metric space $(X,d)$ is
said to be complete if every Cauchy sequence in $X$ is convergent in $X$.

The $b$-metric spaces have the following topological properties \cite{ATD15}:
\begin{enumerate}[label=\textup{(\alph*)}, ref=(\alph*)]
	\item In a $b$-metric space $(X,d),$ $d$ is not necessarily
	continuous in each variable.
	\item In a $b$-metric space $(X,d),$ an open ball is not
	necessarily an open set. An open ball is open if $d$ is continuous in one variable.
\end{enumerate}

\begin{definition}[\cite{C98}]
	\label{NaSeDefinition1.3.5.}  Let $(X,d)$ be a $b$-metric space and $CB\left( X\right) $ denotes the set of all non-empty closed and bounded
	subsets of $X.$ For $x,y\in X$ and $A\in CB(X),$ the following statements
	hold:
	\begin{enumerate} [label=\textup{\arabic*)}, ref=\arabic*]
		\item $d(x,A)\leq b\left( d(x,y)+d(y,A)\right),$ where $d(x,A)=inf\{d(x,a):a \in A \}$;	
		\item For every $\lambda >0$ and $\tilde{a}\in A,$ there is a $\tilde{b}\in
		B $ such that $d(\tilde{a},\tilde{b})\leq \lambda;$		
		\item $d(x,A)=0$ if and only if $x\in \bar{A}=A.$
	\end{enumerate}	
\end{definition}

\begin{lemma} \label{lemma1iterdistupperest}
	Let $(X,d)$ be a $b$-metric space. For any sequence $\{u_{n}\}$ in $X$,
	\begin{equation*}
		d(u_{0},u_{n})\leq bd(u_{0},u_{1})+\dots+b^{n-1}d(u_{n-2},u_{n-1})+b^{n-1}d(u_{n-1},u_{n}).
	\end{equation*}	
\end{lemma}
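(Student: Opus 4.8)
The plan is to prove the inequality by induction on $n$, using only the $b$-triangle inequality (b$_3$) from Definition \ref{NaSeDefbmetricspace}. The base case is immediate: for $n=2$ the claim is literally (b$_3$) applied to $u_0,u_1,u_2$, namely $d(u_0,u_2)\le b\bigl(d(u_0,u_1)+d(u_1,u_2)\bigr)$, which matches the asserted bound since $b^{n-1}=b$ here; the case $n=1$ is the trivial $d(u_0,u_1)\le d(u_0,u_1)$ (and only here is $b\ge 1$ invoked, to keep the degenerate case consistent).

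For the inductive step, assume the inequality holds for sequences of length $n$. First I would split off the first edge via (b$_3$),
\[
d(u_0,u_{n+1})\le b\,d(u_0,u_1)+b\,d(u_1,u_{n+1}),
\]
and then apply the induction hypothesis to the shifted sequence $u_1,u_2,\dots,u_{n+1}$ to bound $d(u_1,u_{n+1})$ by $b\,d(u_1,u_2)+b^2 d(u_2,u_3)+\dots+b^{n-1}d(u_{n-1},u_n)+b^{n-1}d(u_n,u_{n+1})$. Substituting this estimate and distributing the leading factor $b$ raises every exponent by one, producing exactly $b\,d(u_0,u_1)+b^2 d(u_1,u_2)+\dots+b^{n}d(u_{n-1},u_n)+b^{n}d(u_n,u_{n+1})$, i.e. the claimed bound for length $n+1$. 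Equivalently, one may dispense with the formal induction and simply iterate (b$_3$) a total of $n-1$ times, peeling off one edge $d(u_{j-1},u_j)$ at each stage; each peeling multiplies the freshly created terms by one more factor of $b$, so $d(u_{j-1},u_j)$ emerges with coefficient $b^{j}$, while the terminal edge $d(u_{n-1},u_n)$ inherits the power $b^{n-1}$ of its predecessor because it is produced alongside $d(u_{n-2},u_{n-1})$ in the single last application of (b$_3$).

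There is no real obstacle here; the only point demanding a little care is the bookkeeping of the exponents — in particular the coincidence that the last two summands $d(u_{n-2},u_{n-1})$ and $d(u_{n-1},u_n)$ carry the same coefficient $b^{n-1}$. This is automatic once one observes that the terminal edge is separated from its neighbour in one final use of (b$_3$), so both terms land with whatever power of $b$ has accumulated up to that step, and no further factor is ever applied to them.
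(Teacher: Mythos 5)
Your induction is correct: the base case $n=2$ is exactly condition (b$_3$), and peeling off the first edge then applying the hypothesis to the shifted sequence $u_1,\dots,u_{n+1}$ raises every exponent by one, which reproduces the stated coefficients, including the fact that $d(u_{n-1},u_n)$ and $d(u_{n-2},u_{n-1})$ share the coefficient $b^{n-1}$. The paper states Lemma~\ref{lemma1iterdistupperest} without proof, so there is nothing to compare against; your argument is the standard one and fills that gap correctly.
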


  Boriceanu et al. \cite{BBP10} defined multivalued fractals in $b$-metric spaces. Czerwik \cite{CDS97} defined round-off stability of iteration procedures for operators in $b$-metric spaces. Czerwik \cite{C98} obtained nonlinear set-valued contraction mappings in $b$-metric spaces.

\section{Fixed Points of Pre\v{s}i\'c type Contractions in $b$-Metric Spaces}

In 1997, Alber and Guerre-Delabriere \cite{Alber} proved that weakly
contractive mapping defined on a Hilbert space is a Picard operator. Rhoades
\cite{Rhoades} proved that the corresponding result is also valid when
Hilbert space is replaced by a complete metric space. Dutta et al. \cite%
{Dutta} generalized the weak contractive condition and proved a fixed point
theorem for a selfmap. Berinde and P\v{a}curar \cite{Berinde1, Berinde2} established an iterative method for approximating fixed points of Pre\v{s}i\'c contractive mappings.
Chen \cite{CH} defined some Pre\v{s}i\'c type contractive condition and its applications. Khan et al. \cite{K} obtained some convergence results for iterative sequences of Pre\v{s}i\'c type and applications. P\v{a}curar \cite{PA, PA2} approximated common fixed points of Pre\v{s}i\'c-Kannan type operators by a multi-step iterative method. Shukla \cite{Shulka1} obtained some Pre\v{s}i\'c type results in 2-Banach spaces. Shukla and Sen \cite{Shulka2} definded set-valued Pre\v{s}i\'c-Reich type mappings in metric spaces. Recently, Abbas et al. \cite{ATD15} established the fixed point of Pre\v{s}i\'c type mapping satisfying weakly contraction conditions.

In this section, we study several fixed point results for single-valued
Pre\v{s}i\'c type mappings that satisfying generalized contractions in the
framework of $b$-metric spaces. Also an example is given that support the
results proved therein. Our results extend and generalize many comparable
results in the existing literature proved.
We attain fixed point results for single-valued mappings in the framework of $b$-metric spaces along with an example and corollaries. We begin with the
following result.

\begin{theorem}
	\label{NaSeTheorem2.2.1} Let $(X,d)$ be a complete $b$-metric
	space, $k\in \mathbb{Z}_{>0}$ and $f:X^{k}\rightarrow X$ be a given mapping. Suppose that there exists $\phi
	:\mathbb{R}_{\geq 0}\rightarrow \mathbb{R}_{\geq 0}$ a lower semi-continuous function
	having $\phi (t)=0$ if and only if $t=0$ such that
	\begin{equation}
		\begin{array}{l}
			d(f(x_{1},x_{2},\ldots ,x_{k}),f(x_{2},x_{3},\ldots ,x_{k+1}))\\
            \quad \leq \max \{d(x_{i},x_{i+1}):1\leq i\leq k\}-\phi (\max \{d(x_{i},x_{i+1}):1\leq i\leq k\}),
		\end{array}
		\label{t.2.1}
	\end{equation}%
	for all $(x_{1},x_{2},\ldots ,x_{k+1})\in X^{k+1}$. Then, for any arbitrary
	points $x_{1},x_{2},\ldots ,x_{k}\in X$, the sequence $\{x_{n}\}$ defined by
	\eqref{t.1.1} converges to $u\in X$ and $u$ is a fixed point of $f$, that is, $u=f(u,u,\dots,u).$ Moreover, if
	\begin{equation}
		d(f(x,\dots,x),f(y,\dots,y))=d(x,y)-\phi (d(x,y)),  \label{t.2.2}
	\end{equation}%
	holds for all $x,y\in X$ with $x\neq y$, then $u$ is the unique fixed point
	of $f$.
\end{theorem}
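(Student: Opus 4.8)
The plan is to follow the classical Pre\v{s}i\'c scheme adapted to the $b$-metric setting. First I would set $u_n = d(x_n, x_{n+1})$ for the sequence defined by \eqref{t.1.1}, and apply \eqref{t.2.1} with $(x_1,\ldots,x_{k+1})$ replaced by $(x_n,\ldots,x_{n+k})$ to obtain
\[
u_{n+k} \le \max\{u_n,\ldots,u_{n+k-1}\} - \phi(\max\{u_n,\ldots,u_{n+k-1}\}) \le \max\{u_n,\ldots,u_{n+k-1}\}.
\]
From this I would argue that the sequence $M_n := \max\{u_n,\ldots,u_{n+k-1}\}$ is non-increasing, hence convergent to some limit $L \ge 0$. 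Taking limits in the displayed inequality, using lower semicontinuity of $\phi$ and that $\phi(t)=0$ only for $t=0$, would force $L=0$, so $u_n \to 0$.

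\textbf{Cauchy property.} The next and most delicate step is to show $\{x_n\}$ is Cauchy, which is where the $b$-metric constant $b\ge 1$ causes trouble: the usual geometric-series estimate for metric spaces breaks down because $b^{n}$ multiplies the tail terms (see Lemma~\ref{lemma1iterdistupperest}). To get around this I would aim to establish that $u_n \to 0$ at a geometric rate fast enough to beat $b^n$ — concretely, I expect one needs to sharpen the contraction estimate to get $M_{n+k} \le \theta M_n$ (or $u_{n+k}\le \theta\max\{u_n,\ldots,u_{n+k-1}\}$) for some $\theta\in(0,1)$, which does \emph{not} follow from \eqref{t.2.1} alone for a general lower semicontinuous $\phi$. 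This is the main obstacle: either the theorem implicitly requires extra hypotheses on $\phi$ (e.g. $\liminf_{t\to 0^+}\phi(t)/t > 0$, or $\phi$ superadditive/nondecreasing), or one must argue more carefully using the structure of $M_n$ over blocks of length $k$. Assuming such a rate $u_n \le \lambda^n c$ with $\lambda b < 1$ can be extracted, Lemma~\ref{lemma1iterdistupperest} gives $d(x_n,x_m) \le \sum_{i=n}^{m-1} b^{i-n+1}u_i$ which is dominated by a convergent series tail, hence $\to 0$; so $\{x_n\}$ is Cauchy and, by completeness, $x_n \to u$ for some $u\in X$.

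\textbf{Fixed point.} To see $u = f(u,\ldots,u)$, I would estimate $d(u, f(u,\ldots,u))$ using the relaxed triangle inequality \ref{b3bmetrspdef} iterated $k$ times through the points $x_{n+1},\ldots,x_{n+k}$, then insert $x_{n+k} = f(x_n,\ldots,x_{n+k-1})$ and apply \eqref{t.2.1} to compare $f(x_n,\ldots,x_{n+k-1})$ with $f(u,\ldots,u)$ via a chain of intermediate tuples $(x_n,\ldots,x_{n+k-1}), (x_{n+1},\ldots,x_{n+k-1},u), \ldots, (u,\ldots,u)$, picking up a factor bounded by $\sum_j b^{j}\big(\max d(x_{\cdot},x_{\cdot+1}) + d(x_{\cdot},u)\big)$ type terms. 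Since $u_n\to 0$ and $d(x_n,u)\to 0$, all these terms vanish as $n\to\infty$, forcing $d(u,f(u,\ldots,u)) = 0$, i.e. $u = f(u,\ldots,u)$.

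\textbf{Uniqueness.} Finally, assuming \eqref{t.2.2}, suppose $v$ is another fixed point with $v\ne u$. Then $d(u,v) = d(f(u,\ldots,u), f(v,\ldots,v)) = d(u,v) - \phi(d(u,v))$, so $\phi(d(u,v)) = 0$, hence $d(u,v)=0$ by the defining property of $\phi$, a contradiction. Therefore the fixed point is unique. $\qed$
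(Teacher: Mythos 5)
Your outline of the first, third and fourth steps matches the paper's proof: the monotonicity of $M_n=\max\{d(x_i,x_{i+1}):n\leq i\leq n+k-1\}$, the use of lower semicontinuity of $\phi$ to force the limit to be $0$, the estimate of $d(u,f(u,\ldots,u))$ through a chain of intermediate tuples interpolating between $(x_n,\ldots,x_{n+k-1})$ and $(u,\ldots,u)$, and the uniqueness argument from \eqref{t.2.2} are all essentially what the paper does, and these parts are sound.

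The genuine gap is exactly where you place it: the Cauchy property of $\{x_n\}$ is not established, and you are right that it does not follow from \eqref{t.2.1} for a general lower semicontinuous $\phi$. What you should know is that the paper's own proof does not close this gap either. It writes
\begin{equation*}
d(x_{n+k},x_{n+k+p})\leq \sum_{j=1}^{p} b^{j}\bigl(\max\{d(x_{i},x_{i+1}):n+j\leq i\leq n+j+k-1\}-\phi(\cdots)\bigr)
\end{equation*}
and then simply declares ``from this we conclude that $\{x_{n+k}\}$ is a Cauchy sequence.'' This conclusion is unjustified: the weights $b^{j}$ grow geometrically in $j$, while the bracketed factors are only known to tend to $0$ as $n\to\infty$ (and are not known to decay geometrically along the sum), so the right-hand side is not shown to be small. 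Your proposed remedy --- extracting a geometric rate $u_n\leq \lambda^{n}c$ with $\lambda b<1$, which would require an extra hypothesis such as $\liminf_{t\to 0^{+}}\phi(t)/t>0$ or a sharpened contraction $M_{n+k}\leq\theta M_n$ --- is the kind of additional input that would actually be needed; the theorem as stated (and as proved in the paper) leaves this step unsupported. So your proposal is incomplete, but your diagnosis of where and why it is incomplete is accurate, and it coincides with the weak point of the published argument.
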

\begin{proof}
	Let $x_{1},\ldots ,x_{k}$ be arbitrary $k$ elements in $X$. Let us define
	the sequence $\{x_{n}\}$ in $X$ by
	\begin{equation*}
		x_{n+k}=f(x_{n},\dots ,x_{n+k-1}),\, \,n=1,2,\ldots _{.}
	\end{equation*}%
	If $x_{i}=x_{i+1}$ for $i=n,\dots ,n+k-1$, then $u\in f(u,u,\dots ,u),$ that
	is, $u$ is fixed point of $f.$ So, we suppose that $x_{i}\neq x_{i+1}$ any $%
	i=n,\dots ,n+k-1.$ Now for $n\in \mathbb{Z}_{>0},$ by using \eqref{t.2.1},
	we get the following inequalities:%
	\begin{align*}
		& d(x_{k+1},x_{k+2}) =d(f(x_{1},\ldots ,x_{k}),f(x_{2},\ldots ,x_{k+1})) \\
		&\leq \max \left \{ d(x_{i},x_{i+1}):1\leq i\leq k\right \} -\phi \left( \max
		\left \{ d(x_{i},x_{i+1}):1\leq i\leq k\right \} \right)  \\
		&<\max \left \{ d(x_{i},x_{i+1}):1\leq i\leq k\right \} =\max \{d\left(
		x_{1},x_{2}\right) ,d\left( x_{2},x_{3}\right) ,\ldots,d\left(
		x_{k},x_{k+1}\right) \}. \\
		& d(x_{k+2},x_{k+3}) =d(f(x_{2},\ldots ,x_{k+1}),f(x_{3},\ldots ,x_{k+2})) \\
		&\leq \max \left \{ d(x_{i},x_{i+1}):2\leq i\leq k+1\right \} -\phi \left(
		\max \left \{ d(x_{i},x_{i+1}):2\leq i\leq k+1\right \} \right)  \\
		&<\max \left \{ d(x_{i},x_{i+1}):2\leq i\leq k+1\right \} =\max \{d\left(
		x_{2},x_{3}\right) ,d\left( x_{3},x_{4}\right) ,\ldots,d\left(
		x_{k+1},x_{k+2}\right) \} \\
		&<\max \{d\left( x_{1},x_{2}\right) ,d\left( x_{2},x_{3}\right)
		,\ldots,d\left( x_{k},x_{k+1}\right) \}.
	\end{align*}
	and, by mathematical induction, in general for all integers $n\geq 1$,
	\begin{align*}
		& d(x_{k+n},x_{k+n+1})=d(f(x_{n},\ldots ,x_{k+n-1}),f(x_{n+2},\ldots
		,x_{k+n})) \\
		& \leq \max \left \{ d(x_{i},x_{i+1}):n\leq i\leq n+k-1\right \} -\phi \left(
		\max \left \{ d(x_{i},x_{i+1}):n\leq i\leq n+k-1\right \} \right)  \\
		& <\max \left \{ d(x_{i},x_{i+1}):n\leq i\leq n+k-1\right \} \\
        &=\max \{d\left(
		x_{n},x_{n+1}\right) ,d\left( x_{n+1},x_{n+2}\right),\ldots,
        d\left(x_{n+k-1},x_{n+k}\right) \} \\
		& <\max \{d\left( x_{1},x_{2}\right) ,d\left( x_{2},x_{3}\right)
		,\ldots,d\left( x_{k},x_{k+1}\right) \}.
	\end{align*}
	Thus by combining above, we get
	\begin{eqnarray*}
		\max \left \{ d(x_{i},x_{i+1}):n+1\leq i\leq n+k\right \}  &<&\max \left \{
		d(x_{i},x_{i+1}):n\leq i\leq n+k-1\right \}  \\
		&<&\dots <\max \left \{ d(x_{i},x_{i+1}):1\leq i\leq k\right \}.
	\end{eqnarray*}
We surmise that $\max \left \{ d(x_{i},x_{i+1}):n+1\leq i\leq n+k\right \} $
is monotone decreasing and bounded from below. Hence, there exists some $c\geq 0$ such that
	\begin{equation*}
		\lim\limits_{n\rightarrow \infty }\max \left \{ d(x_{i},x_{i+1}):n+1\leq i\leq
		n+k\right \} =c.
	\end{equation*}
For all $n\in\mathbb{Z}_{\geq 1}$, let $n+1\leq j_n\leq n+k$ be such that
$$ d(x_{j_n},x_{j_n+1})= \max \left \{ d(x_{i},x_{i+1}):n+1\leq i\leq
		n+k\right \}.$$ Then,  	
$\lim\limits_{n\rightarrow \infty }d(x_{j_n},x_{j_{n}+1})=c.$
	
We will show now that $c=0$. Actually, taking the upper limits as
$n\rightarrow \infty $ on both sides of the inequality
\begin{multline*}
d(x_{j_n+k},x_{j_n+k+1})=d(f(x_{j_n},\ldots ,x_{j_n+k-1}),f(x_{j_n+1},\ldots,x_{j_n+k})) \\
\leq \max \left \{ d(x_{i},x_{i+1}):j_n\leq i\leq j_n+k-1\right \} \\
         -\phi \left(\max \{d(x_{i},x_{i+1}):j_n\leq i\leq j_n+k-1\} \right),
\end{multline*}
	yields
	$
		c\leq c-\phi \left( c\right) ,
	$
	that is, $\phi \left( c\right) \leq 0.$ As a result $\phi \left( c\right) =0$
	by the definition of $\phi $, we get $\lim\limits_{n\rightarrow \infty }d(x_{j_n},x_{j_{n}+1})=0$
	and in addition, we get
	\begin{equation}
		\lim\limits_{n\rightarrow \infty }d(x_{n+k},x_{n+k+1})=0.  \label{t.2.3}
	\end{equation}%
	Next we verify that $\{x_{n+k}\}$ is Cauchy. For any $n,p\in \mathbb{Z}_{>0}$
	with $n>p$ and by using \eqref{t.2.1}, Lemma \ref{lemma1iterdistupperest} and $b\geq 1$, we get
	\begin{align*}
		& d(x_{n+k},x_{n+k+p}) =d(f(x_{n},\ldots ,x_{n+k-1}),f(x_{n+p},\ldots ,x_{n+p+k-1})) \\
		& \leq \sum\limits_{j=1}^{p} b^{j} d(f(x_{n+j-1},\ldots ,x_{n+j+k-2}),f(x_{n+j},\ldots
		,x_{n+j+k-1}))
		\\
%		& = bd(f(x_{n},\ldots ,x_{n+k-1}),f(x_{n+1},\ldots
%		,x_{n+k})) \\
%		& +\dots +b^{p}d(f(x_{n+p-1},\ldots ,x_{n+p+k-2}),f(x_{n+p},\ldots
%		,x_{n+p+k-1})) \\
		& \leq \sum\limits_{j=1}^{p} b^{j} (\max \left \{ d(x_{i},x_{i+1}):n+j\leq i\leq n+j+k-1\right \}
		\\
		& \hspace{2cm}-\phi \left( \max \{d(x_{i},x_{i+1}):n+j\leq i\leq
		n+j+k-1\} \right) ).
%		\\
%		&=b(\max \left \{ d(x_{i},x_{i+1}):n\leq i\leq n+k-1\right \} -\phi
%		\left( \max \{d(x_{i},x_{i+1}):n\leq i\leq n+k-1\} \right) ) \\
%		& +\dots +b^{p}(\max \left \{ d(x_{i},x_{i+1}):n+p\leq i\leq n+p+k-1\right \}
%		\\
%		& \hspace{2cm}-\phi \left( \max \{d(x_{i},x_{i+1}):n+p\leq i\leq
%		n+p+k-1\} \right) ).
	\end{align*}
	From this we conclude that $\{x_{n+k}\}$ is a Cauchy sequence in $(X,d)$.
	Since $(X,d)$ is complete $b$-metric, there exists a $u$ in $X$ such that
	\begin{equation}
		\lim\limits_{n\rightarrow \infty }d(x_{n+k},x_{n+k+p})=\lim\limits_{n\rightarrow \infty
		}d(x_{n+k},u).  \label{t.2.4}
	\end{equation}%
	For any $n\in \mathbb{R}_{>0}$, we have
	\begin{align*}
		& d(u,f(u,u,\ldots ,u)) \leq bd(u,x_{n+k})+bd(x_{n+k},f(u,u,\ldots ,u)) \\
		&= bd(u,x_{n+k})+bd(f(u,u,\ldots ,u),f(x_{n},x_{n+1},\ldots ,x_{n+k-1}))
		\\
		& \leq bd(u,x_{n+k})+\sum\limits_{j=1}^{k} b^{j+1}d(f(\underbrace{u,\dots,u}_{k-j+1},\underbrace{x_{n},\dots,x_{n+j-2}}_{j-1}),f(\underbrace{u,\dots,u}_{k-j},\underbrace{x_{n},\dots,x_{n+j-1}}_{j}))\\
%		& = bd(u,x_{n+k})+b^{2}d(f(u,u,\ldots ,u),f(u,u,\ldots ,x_{n})) \\
%		& +b^{3}d(f(u,u,\ldots ,x_{n}),f(u,\ldots ,x_{n},x_{n+1})) \\
%		& +\ldots +b^{k+1}d(f(u,x_{n},x_{n+1},\ldots
%		,x_{n+k-2}),f(x_{n},x_{n+1},\ldots ,x_{n+k-1})) \\
		& \leq
bd(u,x_{n+k})+ \sum\limits_{j=1}^{k} b^{j+1}(\max \{d(u,x_{n}),d(x_{n},x_{n+1}),\ldots ,d(x_{n+j-2},x_{n+j-1})\} \\
        & \hspace{3cm}  - \phi(\max \{d(u,x_{n}),d(x_{n},x_{n+1}),\ldots
		,d(x_{n+j-2},x_{n+j-1})\})).
%\\
%& = bd(u,x_{n+k})+ b^{2}(d(u,x_{n})-\phi \left( d(u,x_{n})\right) ) \\
%		& +b^{3}(\max \{d(u,x_{n}),d(x_{n},x_{n+1})\} - \phi(\max \{d(u,x_{n}),d(x_{n},x_{n+1})\} ))  \\
%		& +\ldots +b^{k+1}(\max \{d(u,x_{n}),d(x_{n},x_{n+1}),\ldots
%		,d(x_{n+k-2},x_{n+k-1})\} \\
%        & \quad - \phi(\max \{d(u,x_{n}),d(x_{n},x_{n+1}),\ldots
%		,d(x_{n+k-2},x_{n+k-1})\})) .
	\end{align*}%
	Now, by taking upper limit as $n\rightarrow \infty $ in the upper inequality
	and by using \eqref{t.2.4}, we get
	\[
	d(u,f(u,u,\ldots ,u))\leq 0,
	\]%
	which suggest that $u=f(u,u,\ldots ,u)$. So $u$ is a fixed point of $f$.
	
	To prove the fixed point uniqueness, suppose that there exists $v\in X$ with
	$v\neq u$, such that $v=f(v,v,\dots,v)$. Then by \eqref{t.2.2}, we get
	\begin{eqnarray*}
		d(u,v) &=&d(f(u,u,\dots,u),f(v,v,\dots,v)) \leq d(u,v)-\phi (d(u,v)) <d(u,v),
	\end{eqnarray*}
	a contradiction. Thus, $u$ is the unique point in $X$ and
$u=f(u,u,\dots,u)$.
\qed \end{proof}

\begin{example}
	\label{NaSeExample2.1.2.} Let $X=[0,2]$ and $d(x,y)=(x-y)^{2}$ be a $b$-metric on $X$. Let $k\geq 1$ be a positive integer and $%
	f:X^{k}\rightarrow X$ be the mapping defined by
	\begin{equation*}
		f(x_{1},\dots,x_{k})=\frac{x_{1}+\dots+x_{k}}{2k}\text{ for all }
		x_{1},\dots,x_{k}\in X.
	\end{equation*}    	
Define $\phi :\mathbb{R}_{\geq 0} \rightarrow \mathbb{R}_{\geq 0}$ by
	\begin{equation*}
		\phi (t)=\left \{
		\begin{array}{l}
			\text{ \  \ }\frac{t}{5},\text{ \  \  \  \ if }t\in \lbrack 0,\frac{5}{2}), \\
			\\
			\frac{2^{2n}(2^{n+1}t-3)}{2^{2n+1}-1},\text{ \  \ if }t\in \lbrack
            \frac{2^{2n}+1}{2^{n}},\frac{2^{2(n+1)}+1}{2^{n+1}}],\text{ }n\in
			\mathbb{Z}_{> 0}.
		\end{array}%
		\right.
	\end{equation*}
	An easy computation shows that $\phi $ is lower semi-continuous on $\mathbb{R}_{\geq 0}$ and $\phi (t)=0$ if and only if $t=0.$
	
	  Now, for all $x_{1},\dots,x_{k+1}\in X$, we have
	\begin{eqnarray*}
		&d(f(x_{1},\dots,x_{k}),f(x_{2},\dots,x_{k+1})) \leq \dfrac{\vert x_{1}-x_{k+1}\vert^{2}}{4k^{2}} \leq
        \frac{1}{4}\max\limits_{1\leq i\leq k}\{ \left \vert x_{i}-x_{i+1}\right
		\vert ^{2}\} \\
		&\leq \frac{4}{5}\max\limits_{1\leq i\leq k}\{d(x_{i},x_{i+1})\} =\max\limits_{1\leq i\leq k}\{d(x_{i},x_{i+1})\}-\phi (\max\limits_{1\leq i\leq k}\{d(x_{i},x_{i+1})\}).
	\end{eqnarray*}
	Also, for all $x_{1},x_{2}\in X$, we have
	\begin{eqnarray*}
		d(f(x_{1},\dots,x_{1}),f(x_{2},\dots,x_{2})) &\leq &\frac{1}{8}\left
		\vert x_{1}-x_{2}\right \vert  \\
        &\leq & \frac{4}{5}d(x_{1},x_{2}) =d(x_{1},x_{2})-\phi (d(x_{1},x_{2})).
	\end{eqnarray*}
	Hence $f$ satisfies \eqref{t.2.1} and \eqref{t.2.2}. Thus, all the required hypotheses of
	Theorem \ref{NaSeTheorem2.2.1} are satisfied. Moreover, for any arbitrary points
$x_{1},\dots,x_{k}\in X$, the sequence $\{x_{n}\}$ defined by
$x_{n+k}=f(x_{n},\dots,x_{n+k-1}),\ n=1,2,\ldots \ $converges to $u=0$,
	the unique fixed point of $f$.
\qed \end{example}

\begin{corollary}
	\label{NaSeCorollary2.1.3.} Let $(X,d)$ be a complete $b$-metric space, $k\in\mathbb{Z}_{>0}$ and $f:X^{k}\rightarrow X$ be a given mapping.
If there exists $%
	\lambda \in \lbrack 0,1)$ such that
	\begin{equation}
		d(f(x_{1},\ldots ,x_{k}),f(x_{2},\ldots ,x_{k+1}))\leq \lambda \max
		\{d(x_{i},x_{i+1}):1\leq i\leq k\},  \label{t.2.5}
	\end{equation}
	for all $(x_{1},\ldots ,x_{k+1})\in X^{k+1}$, then, for any $x_{1},\ldots,x_{k}\in X$, the sequence $\{x_{n}\}$ defined in \eqref{t.1.1} converges to $u$, and
	also $u$ is a fixed point of $f$ and $u=f(u,\ldots ,u)$. Moreover, if
	\begin{equation*}
		d(f(x,\ldots ,x),f(y,\ldots ,y))\leq \lambda d(x,y),
	\end{equation*}
	holds for all $x,y\in X$ and $x\neq y$, then $u$ is the unique fixed point
	of $f$.
\end{corollary}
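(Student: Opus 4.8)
The plan is to deduce this corollary from Theorem~\ref{NaSeTheorem2.2.1} by exhibiting a suitable comparison function $\phi$. First I would set $\phi(t)=(1-\lambda)t$ for $t\in\mathbb{R}_{\geq 0}$. Since $\lambda\in[0,1)$, we have $1-\lambda\in(0,1]$, so $\phi$ maps $\mathbb{R}_{\geq 0}$ into $\mathbb{R}_{\geq 0}$, it is continuous (being linear) and hence lower semi-continuous, and $\phi(t)=0$ precisely when $t=0$. Thus $\phi$ meets all the structural requirements imposed on the comparison function in Theorem~\ref{NaSeTheorem2.2.1}.

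Next I would observe that, with this choice of $\phi$, the hypothesis \eqref{t.2.5} is literally the hypothesis \eqref{t.2.1}. Writing $M=\max\{d(x_i,x_{i+1}):1\leq i\leq k\}$, one has
\[
M-\phi(M)=M-(1-\lambda)M=\lambda M,
\]
so the right-hand side of \eqref{t.2.1} coincides with the right-hand side of \eqref{t.2.5}. Hence $f$ satisfies \eqref{t.2.1}, and Theorem~\ref{NaSeTheorem2.2.1} applies: for any $x_{1},\dots,x_{k}\in X$ the sequence $\{x_{n}\}$ defined by \eqref{t.1.1} converges to some $u\in X$ with $u=f(u,\dots,u)$.

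It remains to treat uniqueness, and here a small subtlety appears: the supplementary hypothesis \eqref{t.2.2} of Theorem~\ref{NaSeTheorem2.2.1} is stated as an equality, whereas the extra assumption of the corollary, $d(f(x,\dots,x),f(y,\dots,y))\leq\lambda d(x,y)$, is an inequality, which with $\phi(t)=(1-\lambda)t$ reads $d(f(x,\dots,x),f(y,\dots,y))\leq d(x,y)-\phi(d(x,y))$. The point I would make is that the uniqueness argument in the proof of Theorem~\ref{NaSeTheorem2.2.1} uses only the $\leq$ direction: if $v\neq u$ were another point with $v=f(v,\dots,v)$, then
\[
d(u,v)=d(f(u,\dots,u),f(v,\dots,v))\leq d(u,v)-\phi(d(u,v))<d(u,v),
\]
a contradiction, since $\phi(d(u,v))>0$. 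Therefore $u$ is the unique fixed point of $f$, and the corollary follows.

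I do not expect any genuine obstacle; the only thing to be careful about is precisely this equality-versus-inequality discrepancy, and the chain of estimates underlying Theorem~\ref{NaSeTheorem2.2.1} never exploits the reverse inequality, so the reduction goes through verbatim.
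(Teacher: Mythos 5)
Your proposal is correct and is exactly the intended deduction: the paper states this as a corollary of Theorem~\ref{NaSeTheorem2.2.1} with the implicit choice $\phi(t)=(1-\lambda)t$, and your observation that the uniqueness argument in that theorem only ever uses the inequality $d(f(u,\dots,u),f(v,\dots,v))\leq d(u,v)-\phi(d(u,v))$ (never the reverse direction of the stated equality \eqref{t.2.2}) correctly disposes of the only point of friction. Nothing further is needed.
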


\begin{theorem}
	\label{NaSeTheorem2.1.4} Let $(X,d)$ be a complete $b$-metric space, $k\in
	\mathbb{Z}_{>0}$ and $f:X^{k}\rightarrow X$ be a given mapping. Let us consider that
	there exist $a\in \mathbb{R}$ which is a constant such that $0\leq akb^{k+1}<1$ and
	\begin{equation}
		d(f(x_{1},\dots,x_{k}),f(x_{2},\dots,x_{k+1}))\leq a\max_{1\leq i\leq
			k+1}\{d(x_{i},f(x_{i},\dots,x_{i}))\}  \label{t.2.6}
	\end{equation}%
	for all $(x_{1},\dots,x_{k+1})\in X^{k+1}$. Then,
	\begin{enumerate}[label=\textup{(\roman*)}, ref=(\roman*)]
		\item $f$ contain a unique fixed point $u\in X$,
	
	\item For any points $x_{1},\dots,x_{k}\in X,$ the sequence $\{x_{n}\}$ defined in \eqref{t.1.1} converges to $u$.
\end{enumerate}
\end{theorem}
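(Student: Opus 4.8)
The plan is to analyze the Picard-type iteration \eqref{t.1.1} directly, organizing everything around the diagonal map $F(x):=f(x,\dots,x)$ and the displacements $\delta_n:=d\bigl(x_n,F(x_n)\bigr)$. The key initial observation is that the $k$-tuples $(x_n,\dots,x_{n+k-1})$ and $(x_{n+1},\dots,x_{n+k})$ differ by one Pre\v{s}i\'c shift, so applying \eqref{t.2.6} to them gives $d(x_{n+k},x_{n+k+1})\le a\max\{\delta_i:n\le i\le n+k\}$ for every $n$. The whole matter thus reduces to showing that $\delta_n\to0$ fast enough, and then that the limit of \eqref{t.1.1} is a fixed point and is unique.

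First I would establish a recursion for $\delta_n$. For $n\ge k+1$ one has $x_n=f(x_{n-k},\dots,x_{n-1})$, hence $\delta_n=d\bigl(f(x_{n-k},\dots,x_{n-1}),f(x_n,\dots,x_n)\bigr)$, and I would interpolate between these two $k$-tuples by the chain $v_0=(x_{n-k},\dots,x_{n-1})$, $v_1=(x_{n-k+1},\dots,x_{n-1},x_n)$, \dots, $v_m=(x_{n-k+m},\dots,x_{n-1},x_n,\dots,x_n)$ with $m$ trailing copies of $x_n$, \dots, $v_k=(x_n,\dots,x_n)$, each $v_{m+1}$ being the Pre\v{s}i\'c shift of $v_m$. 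All coordinates occurring in any link lie in $\{x_{n-k},\dots,x_n\}$, so \eqref{t.2.6} bounds each $d\bigl(f(v_m),f(v_{m+1})\bigr)$ by $a\max\{\delta_i:n-k\le i\le n\}$; combining the $k$ links via Lemma~\ref{lemma1iterdistupperest} gives
\[
\delta_n\le a\bigl(b+b^2+\dots+b^{k-1}+b^{k-1}\bigr)\max\{\delta_i:n-k\le i\le n\}\le akb^{k-1}\max\{\delta_i:n-k\le i\le n\}.
\]
Since $akb^{k-1}\le akb^{k+1}<1$, an absorption step applies: if the maximum on the right were attained at $i=n$, then $\delta_n\le akb^{k-1}\delta_n$ would force $\delta_n=0$, so in all cases $\delta_n\le\theta\max\{\delta_i:n-k\le i\le n-1\}$ with $\theta:=akb^{k-1}\in[0,1)$. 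Iterating over consecutive blocks of length $k$ yields $\max\{\delta_i:mk+1\le i\le(m+1)k\}\le\theta^{m}\max\{\delta_1,\dots,\delta_k\}$, so $\delta_n\to0$ geometrically and hence $d(x_n,x_{n+1})\le C\rho^{n}$ for a constant $C\ge0$ and $\rho:=\theta^{1/k}\in[0,1)$.

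Geometric decay of the consecutive distances makes $\{x_n\}$ a Cauchy sequence: this follows from Lemma~\ref{lemma1iterdistupperest} when $b\rho<1$, and in general from the standard refinement of the $b$-metric telescoping estimate ensuring that any sequence whose successive distances decay geometrically is Cauchy. By completeness $x_n\to u$ for some $u\in X$, and to check $u=f(u,\dots,u)$ I would start from $d\bigl(u,F(u)\bigr)\le b\,d(u,x_{n+k})+b\,d\bigl(x_{n+k},F(u)\bigr)$ and expand $d\bigl(x_{n+k},F(u)\bigr)=d\bigl(f(x_n,\dots,x_{n+k-1}),f(u,\dots,u)\bigr)$ via the analogous chain from $(x_n,\dots,x_{n+k-1})$ to $(u,\dots,u)$, which bounds it by $akb^{k-1}\max\bigl(\max\{\delta_i:n\le i\le n+k-1\},\,d(u,F(u))\bigr)$; letting $n\to\infty$ and using $d(u,x_{n+k})\to0$ and $\delta_i\to0$ gives $d\bigl(u,F(u)\bigr)\le akb^{k}\,d\bigl(u,F(u)\bigr)$, and $akb^{k}\le akb^{k+1}<1$ forces $d\bigl(u,F(u)\bigr)=0$. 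Uniqueness is immediate: if $u$ and $v$ are both fixed points then $d(u,v)=d\bigl(f(u,\dots,u),f(v,\dots,v)\bigr)$, and interpolating from $(u,\dots,u)$ to $(v,\dots,v)$ by Pre\v{s}i\'c shifts and applying \eqref{t.2.6} on each link bounds $d(u,v)$ by a multiple of $\max\{d(u,F(u)),d(v,F(v))\}=0$, whence $u=v$; in particular \eqref{t.1.1} converges to the unique fixed point, giving~(ii).

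The step I expect to be the real obstacle is keeping control of the accumulated $b$-powers: one has to hold every interpolation chain to exactly $k$ Pre\v{s}i\'c shifts and route all comparisons through the single diagonal map $F$ so that the constants produced by Lemma~\ref{lemma1iterdistupperest} stay below $1$ under the sole hypothesis $akb^{k+1}<1$; and at the Cauchy step care is needed because the $b$-metric triangle inequality degrades by a factor $b$ at every application, so the naive telescoping of Lemma~\ref{lemma1iterdistupperest} by itself need not suffice when $b$ is large and the refined estimate must be used instead. The absorption argument that turns the ``$\max$ including $\delta_n$'' bound into a genuine contraction-type recursion is the other point requiring precision.
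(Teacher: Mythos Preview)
Your argument is correct, but it follows a genuinely different route from the paper's. The paper proceeds in two separate stages: it first introduces the \emph{auxiliary Picard iteration} $y_{n+1}=F(y_n)$ for the diagonal map $F(x)=f(x,\dots,x)$, proves the Kannan-type inequality $d(F(x),F(y))\le\lambda\max\{d(x,F(x)),d(y,F(y))\}$ with $\lambda=akb^{k}$, and uses this one-dimensional iteration to locate the unique fixed point $u$; only afterwards does it return to the Pre\v{s}i\'c sequence \eqref{t.1.1} and show, via a recursion on $d(x_{n+1},u)$ and Pre\v{s}i\'c's lemma, that it too converges to $u$. You instead analyze the Pre\v{s}i\'c sequence directly from the start through the displacements $\delta_n=d(x_n,F(x_n))$, never introducing a second iteration. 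Your route is more unified and even yields the sharper recursion constant $\theta=akb^{k-1}$. The trade-off shows up exactly where you anticipate: the paper's two-stage design makes the Cauchy step entirely elementary, because the hypothesis $akb^{k+1}<1$ is precisely $b\lambda<1$ for the Picard increments, so Lemma~\ref{lemma1iterdistupperest} applies without further work; in your approach $b\rho<1$ is not guaranteed for $k\ge3$, and you must invoke the (true but not in the paper) general lemma that geometric decay of consecutive distances forces Cauchy in any $b$-metric space. Both arguments are valid; the paper's is more self-contained at the cost of running two convergence proofs, while yours is a single pass that leans on an external $b$-metric result.
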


\begin{proof} \smartqed Let us define a mapping $F:X\rightarrow X$ by $%
	F(x)=f(x,x,\dots,x),$ for all $x\in X$. \\
    For $x,y\in X$, we have
	\begin{eqnarray*}
		d(F(x),F(y)) &\leq &d(f(x,\dots,x),f(y,\dots,y)) \\
		&\leq & \sum\limits_{j=1}^{k} b^{j}d(f(\underbrace{x,\dots,x}_{k-j+1},\underbrace{y,\dots,y}_{j-1})),f(\underbrace{x,\dots,x}_{k-j},\underbrace{y,\dots,y}_{j})).
%\\
%		&=& bd(f(x,\dots,x),f(x,\dots,x,y)) \\
%		&&+b^{2}d(f(x,\dots,x,y),f(x,\dots,x,y,y)) \\
%		&&+\dots+b^{k}d(f(x,y,\dots,y),f(y,\dots,y)).
	\end{eqnarray*}
	Then from \eqref{t.2.6},
	\begin{eqnarray*}
		d(F(x),F(y)) &\leq & a \sum\limits_{j=1}^{k} b^{j} \max\{d(x,f(x,\dots,x),d(y,f(y,\dots,y)\} \\
%		&=&	 ab\max \{d(x,f(x,\dots,x)),d(y,f(y,\dots,y))\} \\
%		&&+ab^{2}\max \{d(x,f(x,\dots,x)),d(y,f(y,\dots,y))\} \\
%		&&+\dots+ab^{k}\max \{d(x,f(x,\dots,x)),d(y,f(y,\dots,y))\} \\
		&\leq &a(b^{k}+\dots+b^{k})\max \{d(x,f(x,\dots,x)),d(y,f(y,\dots,y))\} \\
		&\leq &akb^{k}\max \{d(x,f(x,\dots,x)),d(y,f(y,\dots,y))\},
	\end{eqnarray*}
	and we get
	\begin{equation}
		d(F(x),F(y))\leq \lambda \max \{d(x,F(x)),d(y,F(y))\},  \label{t.2.7}
	\end{equation}
	where $\lambda =akb^{k}\in \lbrack 0,1)$. Now, for any $x_{0}\in X,$ we
	define $x_{n+1}=F\left( x_{n}\right) $ for $n \in \mathbb{Z}_{\geq 0}$. If $x_{l}=x_{l+1}$ for some $l\in \mathbb{Z}_{\geq 0},$ then we get $x_{l}=F\left(
	x_{l}\right) ,$ that is, $x_{l}=f(x_{l},\dots,x_{l}).$ We assume that $x_{l+1}\neq x_{l}$ for all $l\in \mathbb{Z}_{\geq 0}$. From \eqref{t.2.7},
	\begin{multline*}
		d\left( x_{n},x_{n+1}\right) =d(F(x_{n-1}),F(x_{n})) \\
		\leq \lambda \max \{d(x_{n-1},F(x_{n-1})),d(x_{n},F(x_{n}))\} =\lambda \max \{d(x_{n-1},x_{n}),d(x_{n},x_{n+1})\}.
	\end{multline*}
	If $\max \{d(x_{n-1},x_{n}),d(x_{n},x_{n+1})\}=d(x_{n},x_{n+1}),$ then we
	obtain
	\begin{equation*}
		d\left( x_{n},x_{n+1}\right) \leq \lambda d(x_{n},x_{n+1}),
	\end{equation*}
	which gives $x_{n}=x_{n+1}$ as $\lambda <1,$ a contradiction. Hence,
$$\max\{d(x_{n-1},x_{n}),d(x_{n},x_{n+1})\}=d(x_{n-1},x_{n}),$$
and for all $n \in \mathbb{Z}_{> 0}, $ we obtain
	\begin{equation*}
		d\left( x_{n},x_{n+1}\right) \leq \dots\leq \lambda^j d(x_{n-j},x_{j+1}) \leq \dots \leq \lambda ^{n}d\left( x_{0},x_{1}\right) .
	\end{equation*}
	Now for $m,n\in \mathbb{Z}_{> 0}	$ with $m> n,$ we have
\begin{eqnarray*}
		d\left( x_{n},x_{m}\right) &\leq &  \sum\limits_{j=1}^{m-n} b^{j}d\left(x_{n+j-1},x_{n+j}\right)  \leq	\sum\limits_{j=1}^{m-n} b^{j}\lambda^{n+j-1}d(x_{0},x_{1}) \\
		&= &b\lambda ^{n}\left(\sum\limits_{j=0}^{m-n-1}\left( b\lambda \right)^{j}\right) d\left( x_{0},x_{1}\right) = b\lambda ^{n}\frac{1-\left( b\lambda \right)^{m-n}}{1-b\lambda} d\left( x_{0},x_{1}\right) \\
		&\leq &\frac{(b\lambda) ^{n}}{1-b\lambda}d\left( x_{0},x_{1}\right)
	\end{eqnarray*}
%	\begin{eqnarray*}
%		d\left( x_{n},x_{m}\right) &\leq &  bd(x_{n},x_{n+1})+b^{2}d\left(
%		x_{n+1},x_{n+2}\right) +\dots+b^{m-n}d\left( x_{m-1},x_{m}\right) \\
%		&\leq&	b\lambda ^{n}d(x_{0},x_{1})+b^{2}\lambda
%		^{n+1}d(x_{0},x_{1})+\dots+b^{m-n}\lambda ^{m-1}d(x_{0},x_{1}) \\
%		&= &b\lambda ^{n}\left( 1+b\lambda +\left( b\lambda \right)
%		^{2}+\dots+\left( b\lambda \right) ^{m-n-1}\right) d\left( x_{0},x_{1}\right) \\
%&= &b\lambda ^{n}\frac{1-\left( b\lambda \right)^{m-n}}{1-b\lambda} d\left( x_{0},x_{1}\right) \\
%		&\leq &\frac{(b\lambda) ^{n}}{1-b\lambda}d\left( x_{0},x_{1}\right)
%	\end{eqnarray*}
	and on taking limit as $n,m\rightarrow \infty $ implies $d\left(
	x_{n},x_{m}\right) \rightarrow 0.$ Hence the sequence $\{x_{n}\}$ is Cauchy
	and their exists a $u\in X,$ such that $\{x_{n}\}$ converges to $u.$ Now again
	from \eqref{t.2.7}, we have
	\begin{eqnarray*}
		d\left( x_{n},F\left( u\right) \right) &=&d\left( F\left( x_{n-1}\right)
		,F\left( u\right) \right) \\
		&\leq &\lambda \max \{d(x_{n-1},F(x_{n-1})),d(u,F(u))\}
	\end{eqnarray*}
	and taking limit as $n\rightarrow \infty $ yields that
	$d\left( u,F\left( u\right) \right) \leq \lambda d\left( u,F\left( u\right)\right),$
	which gives $u=F\left( u\right),$ that is, $u=f(u,\dots,u).$
	Now, assume that their exists another $v\in X$ such that $v=f(v,\dots,v).$
	From \eqref{t.2.7}, we obtain
	\begin{eqnarray*}
		d\left( u,v\right) &=&d\left( F\left( u\right) ,F\left( v\right) \right) \\
		&\leq &\lambda \max \{d\left( u,F\left( u\right) \right) ,d\left( v,F\left(
		v\right) \right) \} =\lambda \max \{d\left( u,v\right) ,d\left( v,v\right) \}=0,
	\end{eqnarray*}
	which implies that uniqueness of fixed point $u=Fu=f(u,\dots,u)$ of $f.$

	For arbitrary points $x_{1},\dots,x_{k}\in X,$ we are going to prove the
	convergence of the sequence $\{x_{n}\}$ defined by \eqref{t.1.1} to $u$, the unique	fixed point of $f$. For all $n\geq k+1$, we get
	\begin{equation*}
		x_{n}=f(x_{n-k},\dots.,x_{n-1}).
	\end{equation*}
	As we know that $f$ has unique fixed point $u\in X,$ we can write
	\begin{eqnarray*}
		d(x_{n+1},u) &=&d(f(x_{n-k+1},x_{n-k+2},\dots,x_{n}),f(u,\dots,u)) \\
		&\leq &
		\sum\limits_{j=1}^{k} b^{j}d(f(x_{n-k+j},\dots,x_{n},\underbrace{u,\dots,u}_{j-1}),f(x_{n-k+j+1},\dots,x_{n},\underbrace{u,\dots,u}_{j})).\\
%		=&&
%		bd(f(x_{n-k+1},\dots,x_{n}),f(x_{n-k+2},\dots,x_{n},u)) \\
%		&&+b^{2}d(f(x_{n-k+2},\dots.,x_{n},u),f(x_{n-k+3},\dots,x_{n},u,u)) \\
%		&&+\dots+b^{k}d(f(x_{n},u,\dots,u),f(u,u,\dots,u)).
	\end{eqnarray*}
	This suggest from \eqref{t.2.6} that
	\begin{eqnarray*}
		d(x_{n+1},u) &\leq &
		a\sum\limits_{j=1}^{k} b^{j}\max\{{d(x_{n-k+j},F(x_{n-k+j})),\dots,d(x_{n},F(x_{n}))},\\ & & \hspace{4cm} \underbrace{d(u,F(u)),\dots,d(u,F(u))}_{j}\} \\
%		&=&ab\max\{d(x_{n-k+1},F(x_{n-k+1})),\dots,d(x_{n},F(x_{n})),d(u,F(u))\} \\
%		&&+ab^{2}\max
%		\{d(x_{n-k+2},F(x_{n-k+2})),\dots,d(x_{n},F(x_{n})),d(u,F(u)),d(u,F(u))\} \\
%		&&+\dots+ab^{k}\max \{d(x_{n},F(x_{n})),d(u,F(u)),\dots,d(u,F(u))\} \\
		&\leq& 	ab^k\sum\limits_{j=1}^{k}\max\{{d(x_{n-k+j},F(x_{n-k+j})),\dots,d(x_{n},F(x_{n}))},\\ & & \hspace{4cm}
\underbrace{d(u,F(u)),\dots,d(u,F(u))}_{j}\}.
%		 \\
%		&=&ab^{k}\max
%		\{d(x_{n-k+1},F(x_{n-k+1})),\dots,d(x_{n},F(x_{n})),d(u,F(u))\} \\
%		&&+ab^{k}\max
%		\{d(x_{n-k+2},F(x_{n-k+2})),\dots,d(x_{n},F(x_{n})),d(u,F(u)),d(u,F(u))\} \\
%		&&+\dots+ab^{k}\max \{d(x_{n},F(x_{n})),d(u,F(u)),\dots,d(u,F(u))\}.
	\end{eqnarray*}
	Since $u=F(u)$, we get
	\begin{eqnarray}
		d(x_{n+1},u) &\leq & ab^k\sum\limits_{j=1}^{k}\max\{{d(x_{n-k+j},F(x_{n-k+j})),\dots,d(x_{n},F(x_{n}))}\}.  \label{t.2.8}
%		&=& b^{k}a\max
%		\{d(x_{n-k+1},F(x_{n-k+1})),\dots,d(x_{n},F(x_{n}))\}  \notag \\
%		&&+b^{k}a\max \{d(x_{n-k+2},F(x_{n-k+2})),\dots,d(x_{n},F(x_{n}))\}  \notag \\
%		&&+\dots+b^{k}ad(x_{n},F(x_{n})).  \label{t.2.8}
	\end{eqnarray}
	For all $j\in \mathbb{Z}_{> 0}	$, we have
	\begin{equation}
		d(x_{j},F(x_{j}))\leq b[d(x_{j},u)+d(u,F(x_{j}))].  \label{t.2.9}
	\end{equation}
	By \eqref{t.2.7}, we possess
	\begin{eqnarray*}
		d(u,F(x_{j})) &=&d(F(u),F(x_{j})) \\
		&\leq &\lambda \max \{d(u,F(u)),d(x_{j},F(x_{j})\} \\
		&=&\lambda d(x_{j},F(x_{j})).
	\end{eqnarray*}
	Thus \eqref{t.2.9} becomes
	\begin{equation*}
		d(x_{j},F(x_{j}))\leq b[d(x_{j},u)+\lambda d(x_{j},F(x_{j}))].
	\end{equation*}
	which provides%
	\begin{equation}
		d(x_{j},F(x_{j}))\leq \frac{b}{1-\lambda b}d(x_{j},u)\text{, for all }j\in \mathbb{Z}_{>0}.  \label{t.2.10}
	\end{equation}%
	Using \eqref{t.2.8} and \eqref{t.2.10}, we obtain 	
\begin{eqnarray}
		d(x_{n+1},u) &\leq &\frac{ab}{1-\lambda b}\max
		\{d(x_{n-k+1},u),\dots,d(x_{n},u)\}  \notag \\
		&&+\dots+\frac{ab}{1-\lambda b}\max \{d(x_{n-k+i},u),\dots,d(x_{n},u)\}  \notag \\
		&&+\dots+\frac{ab}{1-\lambda b}d(x_{n},u)  \notag \\
		&\leq &\frac{abk}{1-\lambda b}\max \{d(x_{n-k+1},u),\dots,d(x_{n},u)\}  \notag
	\end{eqnarray}%
	for all $n\geq k$. Setting
	\begin{align*}
		\zeta _{n}=d(x_{n},u)\text{, for all }n\in \mathbb{Z}_{> 0},\\
		\alpha =\frac{abk}{1-\lambda b},
	\end{align*}
	we obtain
	$
		\zeta _{n+1}\leq \alpha \max \{ \xi _{n-k+1},\dots,\zeta _{n}\},
	$
	for all $n\geq k$. We have $0\leq \alpha <1$. Following the same arguments
	from \cite[Lemma 2]{Presic}, there exists $L>0$ and $\theta \in (0,1)$ such
	that $\zeta _{n}\leq L\theta ^{n}$ for all $n\in
	\mathbb{Z}_{> 0},$ namely such that
	\begin{equation*}
		d(x_{n},u)\leq L\theta ^{n}\text{ for all }n\geq 1.
	\end{equation*}
	Taking limit as $n\rightarrow \infty $ in the above inequality, we get
$\lim\limits_{n\rightarrow \infty }d(x_{n},u)=0$, So sequence $\{x_{n}\}$ converges to a unique fixed point of $f$.
	\qed \end{proof}

\begin{theorem}
	\label{NaSeTheorem2.1.5.} Let $(X,d)$ be a complete $b$-metric
	space, $k\in\mathbb{Z}_{>0}$ and $f:X^{k}\rightarrow X$ a mapping fulfilling the following
	contractive type condition%
	\begin{equation*}
		d(f(x_{1},\dots,x_{k}),f(x_{2},\dots,x_{k+1}))\leq \eta \max_{1\leq i\leq
			k}\{d(x_{i},x_{i+1})\},
	\end{equation*}
	where $\eta \in (0,1)$ a constant and $x_{1},\dots,x_{k+1}$ are the
	arbitrary elements in $X$. Then there exists $x\in X$ such that $f(x,\dots,x)=x$. Furthermore, if $x_{1},\dots,x_{k}$ are arbitrary points in $X$ and for $n\in \mathbb{Z}_{> 0},$
	\begin{equation*}
		x_{n+k}=f(x_{n},\dots,x_{n+k-1}),
	\end{equation*}
	then the sequence taken $\{x_{n}\}_{n=1}^{\infty }$ is convergent and%
	\begin{equation*}
		\lim\limits_{n\rightarrow \infty} x_{n}=f(\lim\limits_{n\rightarrow \infty} x_{n},\dots,\lim\limits_{n\rightarrow \infty} x_{n}).
	\end{equation*}
	Additionally we suppose that, on the diagonal $\triangle \subset X^{k},$
	\begin{equation*}
		d(f(u,\dots,u),f(v,\dots,v))<d(u,v)
	\end{equation*}
	holds for every $u,v\in X,$ and $u\neq v$. Then $x$ is the unique point in $X $ with $f(x,\dots,x)=x.$
\end{theorem}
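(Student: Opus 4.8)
\noindent The plan is to obtain the existence of a fixed point and the convergence of the Pre\v{s}i\'c iteration as an immediate consequence of Theorem~\ref{NaSeTheorem2.2.1} (equivalently, Corollary~\ref{NaSeCorollary2.1.3.}), and then to settle the uniqueness clause separately, using the strict contractive condition on the diagonal.

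First I would note that the hypothesis here is the special case of \eqref{t.2.1} obtained by taking $\phi(t)=(1-\eta)t$. Since $\eta\in(0,1)$, this $\phi:\mathbb{R}_{\geq0}\to\mathbb{R}_{\geq0}$ is continuous, hence lower semi-continuous, and $\phi(t)=0$ if and only if $t=0$. Writing $M=\max_{1\le i\le k}d(x_i,x_{i+1})$, the assumed inequality reads $d(f(x_1,\dots,x_k),f(x_2,\dots,x_{k+1}))\le\eta M=M-(1-\eta)M=M-\phi(M)$, so \eqref{t.2.1} holds. Theorem~\ref{NaSeTheorem2.2.1} then gives, for any $x_1,\dots,x_k\in X$, that the sequence $\{x_n\}$ defined by \eqref{t.1.1} converges to some $u\in X$ with $u=f(u,\dots,u)$. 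In particular $x:=u$ satisfies $f(x,\dots,x)=x$; and since $\lim_{n\to\infty}x_n=u$ while $u=f(u,\dots,u)$, we get $\lim_{n\to\infty}x_n=f(\lim_{n\to\infty}x_n,\dots,\lim_{n\to\infty}x_n)$, which is the asserted convergence statement.

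For uniqueness, suppose $u,v\in X$ satisfy $f(u,\dots,u)=u$ and $f(v,\dots,v)=v$ with $u\ne v$. Putting $F(y)=f(y,\dots,y)$, the diagonal hypothesis yields $d(u,v)=d(F(u),F(v))<d(u,v)$, a contradiction; hence $u$ is the unique point of $X$ with $f(u,\dots,u)=u$.

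If instead a self-contained proof (not invoking Theorem~\ref{NaSeTheorem2.2.1}) is preferred, I would run the classical Pre\v{s}i\'c scheme: set $\mu_n=\max\{d(x_i,x_{i+1}):n\le i\le n+k-1\}$, use the contraction to get $d(x_{n+k},x_{n+k+1})\le\eta\,\mu_n$ and hence $\mu_{n+k}\le\eta\,\mu_n$, so that $d(x_n,x_{n+1})\le K\theta^n$ with $\theta=\eta^{1/k}\in(0,1)$ as in \cite[Lemma 2]{Presic}; then use Lemma~\ref{lemma1iterdistupperest} to estimate $d(x_n,x_m)$ for $m>n$ and deduce that $\{x_n\}$ is Cauchy; completeness supplies the limit $u$; and passing to the limit in the contraction — separating $f(u,\dots,u)$ from $f(x_n,\dots,x_{n+k-1})$ one coordinate at a time by the $b$-triangle inequality, exactly as in the proof of Theorem~\ref{NaSeTheorem2.2.1} — gives $u=f(u,\dots,u)$, and uniqueness is again the one-line argument above. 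The only genuinely delicate step is the Cauchy estimate: iterating the $b$-relaxed triangle inequality introduces growing powers of $b$ which must be absorbed against the geometric decay of the consecutive distances, and this is precisely the role of Lemma~\ref{lemma1iterdistupperest}; everything else is routine.
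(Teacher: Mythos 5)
Your primary argument is correct, but it follows a genuinely different route from the paper's. You reduce the statement to Theorem~\ref{NaSeTheorem2.2.1} via the substitution $\phi(t)=(1-\eta)t$ (which is exactly how the paper obtains Corollary~\ref{NaSeCorollary2.1.3.}), and then you dispose of uniqueness by the one-line strict-inequality argument $d(u,v)=d(F(u),F(v))<d(u,v)$; this correctly handles the weaker diagonal hypothesis $d(f(u,\dots,u),f(v,\dots,v))<d(u,v)$, which neither Theorem~\ref{NaSeTheorem2.2.1} (which assumes the equality \eqref{t.2.2}) nor Corollary~\ref{NaSeCorollary2.1.3.} (which assumes $\le\lambda d(x,y)$) covers as stated. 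The paper instead gives a fully self-contained proof in the classical Pre\v{s}i\'c style: it establishes the geometric decay $\alpha_n\le b^kK\theta^n$ with $\theta=\eta^{1/k}$ by induction over blocks of length $k$, derives the Cauchy property from a telescoping estimate, and passes to the limit coordinate-by-coordinate. Your reduction buys brevity and makes explicit the (otherwise somewhat redundant) relationship between this theorem and Corollary~\ref{NaSeCorollary2.1.3.}, at the cost of inheriting whatever is needed to make Theorem~\ref{NaSeTheorem2.2.1} work; the paper's direct proof buys the explicit convergence rate $d(x_n,x)=O(\theta^n)$. One caution about your alternative self-contained sketch: the step where the powers $b^j$ from Lemma~\ref{lemma1iterdistupperest} are ``absorbed against the geometric decay'' is only automatic when $b\theta<1$, i.e.\ $\eta b^k<1$, which is not assumed here; making the Cauchy argument work for arbitrary $b\ge1$ under only $\eta\in(0,1)$ requires a further device (this same point is glossed over in the paper's own estimate $d(x_n,x_{n+p})\le b^pK\theta^n/(1-\theta)$, whose right-hand side is unbounded in $p$ for fixed $n$). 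Since your main route does not rely on that sketch, your proposal stands.
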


\begin{proof} \smartqed Let $x_{1},\dots,x_{k}$ be $k$ arbitrary points in $X$. By using these points, we define a sequence $\{x_{n}\}$ by
	\begin{equation*}
		x_{n+k}=f(x_{n},\dots,x_{n+k-1})
	\end{equation*}
	for $n=1,2,\dots$. For simplicity of notation set $\alpha _{n}=d(x_{n},x_{n+1}).$ We are going to prove by
	induction that for each $n\in \mathbb{Z}_{> 0},$
	\begin{equation}
		\alpha _{n}\leq b^{k}K\theta ^{n}\ \text{where}\ \theta =\eta^{\frac{1}{k}},\
K=\max \{ \frac{\alpha_{1}}{\theta },\dots,\frac{\alpha_{k}}{\theta^{k}}\}.  \label{t.2.15}
	\end{equation}
	From the definition of $K$, we see that \eqref{t.2.15} is true for $n=1,\dots,k.$ Now
	suppose the following $k$ inequalities:
	\begin{equation*}
		\alpha _{n}\leq b^{k}K\theta ^{n},\text{ }\alpha _{n+1}\leq b^{k}K\theta
		^{n+1},\dots,\text{ }\alpha _{n+k-1}\leq b^{k}K\theta ^{n+k-1}. \label{t.2.16}
	\end{equation*}
	be the induction hypothesis. Then,
	\begin{eqnarray*}
		\alpha _{n+k} &=&d(x_{n+k},x_{n+k-1}) \\
		&=&d(f(x_{n},x_{n+1},\dots,x_{n+k-1}),f(x_{n+1},x_{n+2},\dots,x_{n+k})) \\
		&\leq &\eta \max \{ \alpha _{n},\alpha _{n+1},\dots,\alpha _{n+k-1}\} \quad  \text{(by \eqref{t.2.16} and definition of } \alpha _{i}) \\
		&\leq &\eta \max \{b^{k}K\theta ^{n},b^{k}K\theta ^{n+1},\dots,b^{k}K\theta
		^{n+k-1}\}\ \ \text{(by induction hypothesis)} \\
		&=&\eta b^{k}K\theta^{n}\qquad \text{(as\ $0\leq \theta <1$)} \\
		&=&b^{k}K\theta ^{n+k} \qquad \text{(as\ $\theta =\eta^{\frac{1}{k}}$)}
	\end{eqnarray*}
	and it completes the inductive proof of \eqref{t.2.15}. Next by using \eqref{t.2.15} for
	any $n,p\in \mathbb{Z}_{> 0},$ we have
	\begin{align*}
		& d(x_{n},x_{n+p}) \leq  d(x_{n},x_{n+1})+\dots+b^{j}d(x_{n+j-1},x_{n+j})+\dots+b^{p}d(x_{n+p-1},x_{n+p}) \\
		&\leq
		b^{p}d(x_{n},x_{n+1})+\dots+b^{p}d(x_{n+j-1},x_{n+j})+\dots+b^{p}d(x_{n+p-1},x_{n+p})
		\\
		&\leq b^{p}K\theta ^{n}+\dots+b^{p}K\theta ^{n+j-1}+\dots+b^{p}K\theta ^{n+p-1} \\
		&\leq b^{p}K\theta ^{n}(1+\theta +\dots+\theta ^{n+p-1}) \\
		&=\frac{b^{p}K\theta ^{n}}{1-\theta }.
	\end{align*}
	From this we conclude that $\{x_{n}\}$ is a Cauchy sequence. Since $X$ is complete space, there exists $x\in X$ such that
	$
		x=\lim\limits_{n\rightarrow \infty }x_{n}.
	$
	Then for any integer $n$, we may write
		\begin{align*}
		&d(x,f(x,\dots,x)) \leq b(d(x,x_{n+k})+d(x_{n+k},f(x,\dots,x))) \\
		&= bd(x,x_{n+k})+d(f(x,\dots,x),f(x_{n},\dots,x_{n+k-1}))) \\
		&\leq  bd(x,x_{n+k})+ \sum\limits_{j=1}^{k}  b^{j}d(f(\underbrace{x,\dots,x}_{k-j+
			1},\underbrace{x_{n},\dots,x_{n+j-2},}_{j-1}),f(\underbrace{x,\dots,x}_{k-j},\underbrace{x_{n},\dots,x_{n+j-1},}_{j}))\\
%		&=& bd(x,x_{n+k})+b^2d(f(x,\dots,x,x),f(x,\dots,x,x_{n}))+ \\
%		&&b^{3}d(f(x,\dots,x,x_{n}),f(x,\dots,x_{n},x_{n+1}))+\dots \\
%		&&+b^{k}(f(x,x_{n},x_{n+1},x_{n+k-2}),f(x_{n},x_{n+1,}\dots,x_{n+k-1}\\
%		&\leq &bd(x,x_{n+k})+b\lambda d(x,x_{n})+b^{2}\lambda \max
%		\{d(x,x_{n}),d(x_{n},x_{n+1})\} \\
%		&&+\dots+b^{k}\lambda \max
%		\{d(x,x_{n}),d(x_{n},x_{n+1}),\dots,d(x_{n+k-2},x_{n+k-1})\} \\
		& \leq bd(x,x_{n+k})+ \eta \sum\limits_{j=1}^{k} b^{j} \max	\{d(x,x_{n}),d(x_{n},x_{n+1}),\dots,d(x_{n+j-2},x_{n+j-1})\}  .
			\end{align*}
In the limit, when $n\rightarrow\infty$, we get $d(x,f(x,\dots,x))\leq 0$, which gives $f(x,\dots,x)=x.$ Thus, we proved that
	\begin{equation*}
		\lim\limits_{n\rightarrow \infty} x_{n}=f(\lim\limits_{n\rightarrow \infty} x_{n},\dots,\lim\limits_{n\rightarrow \infty} x_{n}). \
	\end{equation*}
	Now consider that $f(x,\dots,x)=x$ holds. To prove that the fixed point is unique,
	Suppose that for some $y\in X$, $y\neq x$, we have $f(y,\dots,y)=y.$ Then by $d(x,y)=d(f(x,\dots,x),f(y,\dots,y))<d(x,y),$ which is a contradiction.
	So, $x$ is the unique point in $X$ such that $f(x,x,\dots,x)=x.$
\qed \end{proof}

%\input{referenc}

%\printindex
%%\addcontentsline{toc}{chapter}{Index}
%\printindex[aut]
%\addcontentsline{toc}{chapter}{Author Index}
\end{document}